\documentclass[]{article}

\usepackage{amsmath}
\usepackage{amssymb}
\usepackage{amsthm}
\usepackage{authblk}
\usepackage{bm}
\usepackage{booktabs}
\usepackage[letterpaper]{geometry}
\usepackage{hyperref}
\usepackage{mathtools}
\usepackage{siunitx}
\usepackage{xfrac}

\usepackage[
	backend=biber,
	style=numeric-comp,
	isbn=false,
	url=true,
	eprint=false,
	doi=true,
	sorting=none,
]{biblatex}
\title{The Nonparametric Kiefer--Weiss Problem}
\author[1,2]{Michael Fauss}
\author[2]{H.~Vincent Poor}
\author[3]{Abdelhak M.~Zoubir}

\affil[1]{ETS Research Institute} 
\affil[2]{Dept.~of Electrical and Computer Engineering, Princeton University}
\affil[3]{Signal Processing Group, TU Darmstadt}

\theoremstyle{plain}
\newtheorem{theorem}{Theorem}[section]
\newtheorem{lemma}[theorem]{Lemma}
\newtheorem{corollary}[theorem]{Corollary}
\theoremstyle{definition}

\newtheorem{approximation}[theorem]{Approximation}
\DeclareMathOperator*{\argmax}{arg\,max}

\newcommand{\Pb}{\mathbf{P}}

\newcommand{\Ebb}{\mathbb{E}}
\newcommand{\Nbb}{\mathbb{N}}
\newcommand{\Pbb}{\mathbb{P}}
\newcommand{\Rbb}{\mathbb{R}}

\newcommand{\Fcal}{\mathcal{F}}
\newcommand{\Hcal}{\mathcal{H}}
\newcommand{\Ical}{\mathcal{I}}
\newcommand{\Mcal}{\mathcal{M}}
\newcommand{\Xcal}{\mathcal{X}}

\begin{document}

\maketitle

\begin{abstract}
	A nonparametric variant of the Kiefer--Weiss problem is proposed and solved. The objective is to minimize a weighted sum of the error probabilities of a binary sequential test subject to a constraint on its maximum expected sample size. This maximum is taken over all possible probability distributions on the given sequence space. First, it is shown that the nonparametric Kiefer--Weiss problem can be reduced to an optimal stopping problem. Then, the optimal stopping policy is derived under the assumption that at most $k$ uses of randomization are permitted during any run of the test. The solution to the original problem is then obtained by letting $k$ go to infinity. The optimal cost function is shown to be the solution of a nonlinear Bellman equation. The corresponding optimal stopping policy is shown to be based on a two-dimensional test statistic, with one component tracking the likelihood ratio and the other one tracking the expected remaining sample size. Critically, the stopping policy uses randomization to increase the remaining expected sample size for some runs, while stopping early for others. The optimal randomization rule is shown to be determined by a function that maps the likelihood ratio to an integer-valued sample size. Two approximations of this function are proposed that can be evaluated easily in practice. The results are illustrated with two numerical examples of nonparametric Kiefer--Weiss tests, one for a shift in the success probability of a Bernoulli distribution, and one for a shift in the mean of a normal distribution.
\end{abstract}

\section{Introduction}   
\label{sec:introduction} 

A central concern in sequential hypothesis testing is the efficient use of observations: one wishes to minimize the number of samples required to accept one of two or more hypotheses with sufficiently small error probabilities. Wald and Wolfowitz \cite{Wald1948} showed that for two simple hypotheses the sequential probability ratio test (SPRT) is optimal in the sense that it minimizes the expected sample size simultaneously under both hypotheses. However, the sample efficiency of the SPRT is highly sensitive to model mismatch. In fact, when the true distribution differs from the assumed ones, the expected sample size of an SPRT can significantly exceed that of a fixed-sample-size test (FSST) with identical error probabilities---compare, for example, \cite[Figs.~3.4 and 5.1]{Tartakovsky2014}.

Prompted by this observation, Jack Kiefer and Lionel Weiss proposed to design a sequential test that, in addition to meeting the targeted error probabilities under both hypotheses, minimizes the \emph{maximum} expected sample size over a parametric family of distributions \cite{Kiefer1957, Weiss1962}. More formally, Kiefer and Weiss assumed that observations are drawn independently from a parametric distribution $P_\theta$, with $\theta = \theta_0$ and $\theta = \theta_1$ under the respective hypotheses. The Kiefer--Weiss test (KWT) is then defined as the sequential test that admits prescribed Type I and Type II error probabilities and minimizes the maximum expected sample size over $\theta \in \Theta$, where $\Theta$ is a set of feasible parameter values. This problem, in various forms, has received considerable attention in the literature \cite{Dvoretzky1953, Anderson1960, Robbins1972, Lai1973, Lorden1976, Eisenberg1982, Huffman1983, Dragalin1988, Lai1988, Pavlov1991, Augustin2001, Zhitlukhin2013, Novikov2022, Novikov2022a, Novikov2023}.

The general solution of the Kiefer--Weiss problem turned out to be rather elusive, which caused the focus to shift to special cases and approximations. The most well-studied cases are the Kiefer--Weiss test for a shift in the mean parameter of a normal distribution \cite{Anderson1960, Weiss1962, Lai1973, Lorden1976} and the closely related problem of testing the drift parameter of a Wiener process \cite{Dvoretzky1953, Zhitlukhin2013}. Asymptotic, approximate and numeric results exist for more general families of distributions \cite{Robbins1972, Eisenberg1982, Huffman1983, Dragalin1988, Pavlov1991, Augustin2001, Novikov2022, Novikov2022a, Novikov2023}. An up-to-date treatment of the Kiefer--Weiss problem and some of its variations can be found in \cite[Section~5.3]{Tartakovsky2014}.

In this paper, we study a novel variant of the Kiefer--Weiss problem: Our objective is to minimize a weighted sum of the Type I and Type II error probabilities of a binary sequential test subject to a constraint on its maximum expected sample size. However, rather than over a parametric family, this maximum is taken over \emph{all} possible probability distributions on the given sequence space. We therefore refer to the resulting formulation as the \emph{nonparametric Kiefer--Weiss problem}\footnote{We first introduced and investigated this problem in an unpublished preprint \cite{Fauss2020}. The approach used in this paper is fundamentally different and the presented results are significantly stronger and more general. We do not use any results from \cite{Fauss2020}, and do not expect the reader to be familiar with it. However, for the interested reader, \cite{Fauss2020} provides a more detailed discussion and motivation of the Kiefer--Weiss problem.} and the corresponding optimal test as the \emph{nonparametric Kiefer--Weiss test} (NPKWT).

The remainder of the paper is organized as follows: In Section~\ref{sec:notation}, we introduce our notation and assumptions. The core sections of the paper are Section~\ref{sec:problem}, in which the nonparametric Kiefer--Weiss problem is formally defined, and Section~\ref{sec:test}, in which its solution is derived. 
The results are discussed in Section~\ref{sec:discussion}. Two approximations of the optimal stopping policy are proposed in Section~\ref{sec:approximations}. Numerical examples and illustrations are presented in Section~\ref{sec:examples}. Section~\ref{sec:conclusion} concludes the paper and briefly discusses directions for future research. 

\section{Notation and Assumptions} 
\label{sec:notation}							 

Throughout the paper, $\Nbb_{\geq n}$ denotes the set of integers greater than or equal to $n$. Analogously, $\Rbb_{\geq x}$ denotes the set of real numbers greater than or equal to $x$. The set of all sequences with values in $[0, 1]$, sometimes referred to as the Hilbert cube \cite[Section~3.9]{Aliprantis2007}, is denoted by $\Ical^\infty \coloneq [0,1]^\infty$. The indicator function of a set $\mathcal{A}$ is denoted by $\bm{1}_{\mathcal{A}}$. We write $x^* = \argmax_{x \in \Xcal} f(x)$ to indicate that $x^*$ is a maximizer of $f$ over $\Xcal$. Random variables are denoted by uppercase letters, and their realizations by the corresponding lowercase letters. Since it appears frequently in the sequel, we define the function $g \colon \Rbb_{\geq 0} \to [0,1]$ as $g(z) \coloneq \min\{1, z\}$.

Let $\bm{X} = (X_1, X_2, \ldots)$ be a sequence of random variables with values in a measurable space $(\Xcal, \Fcal)$. The truncated sequence $(X_1, \ldots, X_t)$ is denoted by $\bm{X}_t$. The joint distribution of $\bm{X}$ is denoted by $\Pb \in \Mcal(\Xcal^\infty)$, where $\Mcal(\Xcal^\infty)$ denotes the set of all probability measures on the sequence space $\Xcal^\infty$. We assume that two simple hypotheses about $\Pb$ are given:
\begin{equation*}
	\begin{aligned}
		\Hcal_0 &\colon \bm{X} \text{ i.i.d.\ according to } P_0, \\ 
		\Hcal_1 &\colon \bm{X} \text{ i.i.d.\ according to } P_1.
	\end{aligned}
\end{equation*}
In words, we seek to test whether the elements of $\bm{X}$ are independent and identically distributed (i.i.d.) according to $P_0$ or to $P_1$. For the sake of exposition, it is further assumed that $P_1$ is absolutely continuous with respect to $P_0$ and that the Kullback--Leibler divergence $D_\text{KL}(P_0 \Vert P_1)$ is non-zero and finite. The densities of $P_0$ and $P_1$ with respect to a suitable background measure are denoted by $p_0$ and $p_1$, respectively. Possible relaxations of these assumptions are discussed in Section~\ref{sec:discussion}.

At every time $t$, a sequential test needs to make a decision to stop or continue. Let $\bm{S} = (S_0, S_1, \ldots)$ denote a sequence of binary random variables corresponding to this decision, that is, the sequential test stops at time $t$ if $S_t = 1$ and continues if $S_t = 0$. The corresponding stopping time is defined as
\begin{equation*}
	T = \min\{\, t \geq 0 : S_t = 1 \,\}.
\end{equation*}
Upon stopping, the test outputs a binary decision $D$, where $D = 0$ indicates acceptance of $\Hcal_0$, and $D = 1$ indicates acceptance of $\Hcal_1$. 

Let $\Pbb$ be the joint distribution of $(\bm{X}, \bm{S}, D)$, and let $\Ebb$ denote the corresponding expectation operator. We define the conditional expectations:
\begin{equation}
	\psi_t = \psi_t(\bm{x}_t) \coloneq \Ebb\bigl[ S_t \,|\, \bm{X}_t = \bm{x}_t \bigr]
	\label{eq:psi_t}
\end{equation}
and
\begin{equation}
	\delta_t = \delta_t(\bm{x}_t) \coloneq \Ebb\bigl[ D \,|\, T = t, \bm{X}_t = \bm{x}_t \bigr].
	\label{eq:delta_t}
\end{equation}
The sequence $\psi = (\psi_0, \psi_1, \ldots) \in \Ical^\infty$ is referred to as a \emph{stopping policy}, and the sequence $\delta = (\delta_0, \delta_1, \ldots) \in \Ical^\infty$ as a \emph{decision policy}. The pair $(\psi, \delta)$ is called a \emph{testing policy} and completely specifies a sequential test. 

For notational convenience, we define the shorthand
\begin{equation}
	\phi_t = \phi_t(\bm{x}_t) \coloneq \Pbb\bigl[T = t \mid \bm{X}_t = \bm{x}_t \bigr]
	= \psi_t(\bm{x}_t)\prod_{n=0}^{t-1} \bigl(1 - \psi_n(\bm{x}_n)\bigr),
	\label{eq:phi_t}
\end{equation}
which denotes the probability of stopping at time $t$ given observations $\bm{x}_t$. We further define
\begin{equation}
	\tau(\bm{x}) \coloneq T \mid (\bm{X} = \bm{x})
	\label{eq:tau}
\end{equation}
to represent the stopping time associated with a given observation sequence $\bm{x}$. Note that $\tau(\bm{x})$ is deterministic for deterministic stopping policies but random in general.

For notational clarity, we make explicit the dependence of $\Pbb$ on the sample distribution, the stopping policy, and the decision policy. Specifically, we write $\Pbb = \Pbb_{\Pb, \psi, \delta}$ and $\Ebb = \Ebb_{\Pb,\psi,\delta}$ for the probability distribution and expectation, respectively. Under hypothesis $\Hcal_i$, $i \in \{0,1\}$, we write $\Pbb_{i,\psi,\delta}$ and $\Ebb_{i,\psi,\delta}$. Whenever a probability or expectation does not depend on $\Pb$, $\psi$, or $\delta$, the corresponding subscripts are omitted. For example, $\Pbb_{\Pb, \psi, \delta}\bigl[ T = 0 \bigr] = \Pbb_{\psi}\bigl[ T = 0 \bigr]$.

Finally, the Type~I and Type~II error probabilities of a sequential test of $\Hcal_1$ against $\Hcal_0$ using policy $(\psi,\delta)$ are written as
\begin{equation*}
	\alpha(\psi,\delta) \coloneq \Ebb_{0,\psi,\delta}[D]
	\quad \text{and} \quad
	\beta(\psi,\delta) \coloneq \Ebb_{1,\psi,\delta}[1-D].
\end{equation*}
We are now ready to formulate the nonparametric Kiefer--Weiss problem.

\section{The Nonparametric Kiefer--Weiss Problem} 
\label{sec:problem}																

Let $\Ical_c^\infty$ denote the set of stopping policies for which the expected sample size of the underlying sequential test is bounded by some constant $c \geq 0$ for all possible distributions of $\bm{X}$:
\begin{equation}
	\Ical_c^\infty \coloneq \bigl\{\psi \in I^\infty \colon \Ebb_{\Pb, \psi}\bigl[\, T \,\bigr] \leq c \quad \forall \, \Pb \in \Mcal(\Xcal^\infty) \bigr\}.
	\label{eq:stopping_policy_prob}
\end{equation}
We define the nonparametric Kiefer--Weiss problem as follows:
\begin{equation}
	\inf_{\psi \in \Ical_c^\infty} \; \inf_{\delta \in \Ical^\infty} \; \alpha(\delta, \psi) + z \, \beta(\delta, \psi),
	\label{eq:nonpa_kiwei}
\end{equation}
where $z, c \geq 0$. In words, we seek to design a policy for a sequential test between two simple hypotheses that minimizes the weighted sum of its error probabilities, subject to the constraint that its expected sample size does not exceed $c$ for all possible distributions of $\bm{X}$.

In the remainder of this section, some useful simplifications of \eqref{eq:nonpa_kiwei} are derived. First, we show that \eqref{eq:stopping_policy_prob} can equivalently be defined via a pointwise constraint on the expected conditional sample size, $\Ebb_\psi[\tau(\bm{x})]$.

\begin{theorem}
	The set of stopping policies $\Ical_c^\infty$ in \eqref{eq:stopping_policy_prob} can equivalently be defined as
	\begin{equation}
		\Ical_c^\infty = \bigl\{\psi \in I^\infty \colon \Ebb_\psi\bigl[ \tau(\bm{x}) \bigr] \leq c \quad \forall \, \bm{x} \in \Xcal^\infty \bigr\}.
		\label{eq:stopping_policy_pointwise}
	\end{equation}
	\label{th:equivalence}
\end{theorem}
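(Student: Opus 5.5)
We prove the two set inclusions separately. Throughout, write $m_\psi(\bm{x}) \coloneq \Ebb_\psi[\tau(\bm{x})]$ for the expected conditional sample size along a fixed sequence $\bm{x}$. Since the randomization in the stopping decisions is independent of $\bm{X}$ given the observed prefix, $m_\psi$ can be written explicitly in terms of the policy:
\begin{equation*}
	m_\psi(\bm{x}) = \sum_{t=0}^\infty \Pbb_\psi\bigl[T > t \mid \bm{X} = \bm{x}\bigr] = \sum_{t=0}^\infty \prod_{n=0}^{t} \bigl(1 - \psi_n(\bm{x}_n)\bigr).
\end{equation*}
This is a nonnegative, measurable function of $\bm{x}$, possibly taking the value $+\infty$. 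The key identity is then obtained by conditioning on $\bm{X}$ and applying the tower property (Tonelli, since everything is nonnegative):
\begin{equation*}
	\Ebb_{\Pb,\psi}[T] = \int_{\Xcal^\infty} m_\psi(\bm{x}) \, \Pb(d\bm{x}) \qquad \text{for every } \Pb \in \Mcal(\Xcal^\infty).
\end{equation*}

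For the inclusion ``$\supseteq$'', suppose $m_\psi(\bm{x}) \le c$ for all $\bm{x}$. Integrating this bound against any $\Pb$ and using the identity above gives $\Ebb_{\Pb,\psi}[T] \le c$. Hence $\psi$ belongs to the set defined in \eqref{eq:stopping_policy_prob}.

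For the inclusion ``$\subseteq$'', suppose $\Ebb_{\Pb,\psi}[T] \le c$ for all $\Pb$. Fix an arbitrary $\bm{x} \in \Xcal^\infty$ and take $\Pb = \delta_{\bm{x}}$, the Dirac measure at $\bm{x}$. The identity then reduces to $\Ebb_{\delta_{\bm{x}},\psi}[T] = m_\psi(\bm{x})$, so $m_\psi(\bm{x}) \le c$. Since $\bm{x}$ was arbitrary, the pointwise constraint \eqref{eq:stopping_policy_pointwise} holds.

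The main obstacle is technical: we need the Dirac measure to be admissible and $m_\psi$ to be well defined at every point. Singletons in $\Xcal^\infty$ are measurable in the product $\sigma$-algebra whenever singletons of $\Xcal$ are in $\Fcal$. To avoid even relying on that, we can define the point mass as the product of the coordinate point masses $\delta_{x_t}$, which is a valid measure on the product $\sigma$-algebra in all cases. Under it, $\bm{X} = \bm{x}$ almost surely. The conditional law of $T$ given $\bm{X} = \bm{x}$ is determined by the product formula for $\phi_t$ in \eqref{eq:phi_t}, so it is defined for every $\bm{x}$, not merely almost everywhere. Consequently, evaluating at a point mass recovers $m_\psi(\bm{x})$ exactly. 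The expectation $\Ebb_\psi[\tau(\bm{x})]$ is interpreted as this pointwise expression throughout.
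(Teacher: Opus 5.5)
Your proof is correct and follows the paper's argument: the tower property gives $\Ebb_{\Pb,\psi}[T] = \Ebb_{\Pb}[\tau(\bm{X})] \leq c$ for one inclusion, and a unit point mass on a fixed sequence gives the other (the paper phrases this as a contrapositive with a violating $\tilde{\bm{x}}$). Your explicit formula for $\Ebb_\psi[\tau(\bm{x})]$ and your measurability remarks about the point mass add rigor the paper omits, but they do not change the route.
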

Theorem~\ref{th:equivalence} is proven in Appendix~\ref{apx:equivalence}. 

Next, it is shown that the optimal \emph{testing} problem can be reduced to an optimal \emph{stopping} problem by minimizing out the decision policy. Variations of this result are well-known in sequential analysis. It is stated and proven here for completeness.
\begin{theorem}
	\label{th:reduction}
	The decision rule $\delta^*$ that solves the inner minimization in \eqref{eq:nonpa_kiwei} is a likelihood ratio test of the form
	\begin{equation}
		\delta_t^*(\bm{x}_t) \begin{dcases}
			= 0, & z \prod_{n=1}^t \frac{p_1(x_n)}{p_0(x_n)} < 1, \\
			\in [0, 1], & z \prod_{n=1}^t \frac{p_1(x_n)}{p_0(x_n)} = 1, \\
			= 1, & z \prod_{n=1}^t \frac{p_1(x_n)}{p_0(x_n)} > 1. \\
		\end{dcases}
		\label{eq:delta_opt}
	\end{equation}
	Moreover, it holds that
	\begin{equation}
		\alpha(\psi, \delta^*) + z \, \beta(\psi, \delta^*)
		= \Ebb_{0, \psi}\Biggl[ g\biggl(z \prod_{t=1}^T \frac{p_1(X_t)}{p_0(X_t)} \biggr) \Biggr].
		\label{eq:cost_delta_opt}
	\end{equation}
\end{theorem}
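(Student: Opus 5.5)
We will prove Theorem~\ref{th:reduction} by writing both error probabilities as sums over the stopping time and then minimizing pointwise in $\delta_t(\bm{x}_t)$. Fix $\psi$ and write $\Lambda_t(\bm{x}_t) \coloneq \prod_{n=1}^t p_1(x_n)/p_0(x_n)$, with $\Lambda_0 \equiv 1$. By the definitions \eqref{eq:phi_t} and \eqref{eq:delta_t}, and because the decision at stopping depends on the data only through $\delta_T(\bm{X}_T)$, we have
\begin{equation*}
	\alpha(\psi,\delta) = \sum_{t=0}^\infty \Ebb_0\bigl[\phi_t(\bm{X}_t)\,\delta_t(\bm{X}_t)\bigr],
	\qquad
	\beta(\psi,\delta) = \sum_{t=0}^\infty \Ebb_1\bigl[\phi_t(\bm{X}_t)\,\bigl(1-\delta_t(\bm{X}_t)\bigr)\bigr].
\end{equation*}
Absolute continuity of $P_1$ with respect to $P_0$ gives the change of measure $\Ebb_1[h(\bm{X}_t)] = \Ebb_0[h(\bm{X}_t)\Lambda_t(\bm{X}_t)]$ for nonnegative measurable $h$. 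Applying it term by term, the objective becomes
\begin{equation*}
	\alpha + z\beta = \sum_{t=0}^\infty \Ebb_0\Bigl[\phi_t(\bm{X}_t)\bigl(\delta_t(\bm{X}_t) + z\Lambda_t(\bm{X}_t)\bigl(1-\delta_t(\bm{X}_t)\bigr)\bigr)\Bigr].
\end{equation*}
If $T=\infty$ with positive probability, the test never decides on those runs. For $\psi\in\Ical_c^\infty$ this cannot happen, since $T$ is almost surely finite. For a general $\psi$ the same identity still holds under the convention that no error is counted on $\{T=\infty\}$.

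Since $\phi_t \geq 0$, each integrand is affine in $\delta_t(\bm{x}_t)\in[0,1]$ and can be minimized separately for each $t$ and $\bm{x}_t$. The integrand equals $\phi_t\bigl(z\Lambda_t + \delta_t(1 - z\Lambda_t)\bigr)$. It is minimized by $\delta_t = 0$ when $z\Lambda_t < 1$ and by $\delta_t = 1$ when $z\Lambda_t > 1$, and any $\delta_t\in[0,1]$ is optimal when $z\Lambda_t = 1$. This is exactly \eqref{eq:delta_opt}. The minimal value of $\delta_t + z\Lambda_t(1-\delta_t)$ is $\min\{1, z\Lambda_t\} = g(z\Lambda_t)$.

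Because the pointwise minimizer is a measurable function of $\bm{x}_t$, it defines an admissible decision policy. It attains the infimum of every term simultaneously, so it attains the infimum of the sum. For any other $\delta$, the monotonicity of sums of nonnegative terms gives a cost at least as large. Substituting the minimum back in yields
\begin{equation*}
	\sum_{t}\Ebb_0\bigl[\phi_t\, g(z\Lambda_t)\bigr] = \Ebb_{0,\psi}\bigl[g(z\Lambda_T)\bigr],
\end{equation*}
which is \eqref{eq:cost_delta_opt}.

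The main point of care is the first step, namely justifying the decomposition over $t$ with the randomized quantities $\phi_t$ and $\delta_t$. This requires that, conditionally on $\bm{X}$, the stopping variables $S_t$ depend only on $\bm{X}_t$ and that $D$ depends only on $(T, \bm{X}_T)$. We will state these as the intended causal structure of a sequential test. Interchanging sum and expectation is then justified by Tonelli's theorem, since all terms are nonnegative.
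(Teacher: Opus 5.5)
Your proof is correct and follows the paper's argument essentially step for step: decompose $\alpha + z\beta$ over $\{T=t\}$ using $\phi_t$ and $\delta_t$, minimize the integrand pointwise in $\delta_t$ to obtain $\min\{1, z\Lambda_t\}\,\phi_t$, and reassemble the sum into $\Ebb_{0,\psi}[g(z\Lambda_T)]$ via Tonelli. The only difference is cosmetic. You pass to $P_0$ via the change of measure $\Ebb_1[h] = \Ebb_0[h\Lambda_t]$ before minimizing, whereas the paper minimizes $\min\{\prod p_0, z\prod p_1\}$ in density form and then factors out $\prod p_0$; your remark on $\{T=\infty\}$ is a small extra point of care that the paper leaves implicit.
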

Theorem~\ref{th:reduction} is proven in Appendix~\ref{apx:reduction} 

The nonparametric Kiefer--Weiss problem can now be written as
\begin{equation}
	\inf_{\psi \in \Ical_c^\infty} \; \alpha(\psi) + z \, \beta(\psi),
	\label{eq:nonpa_kiwei_stopping}
\end{equation}
where, in a slight abuse of notation, we defined $\alpha(\psi) \coloneq \alpha(\psi, \delta^*)$ and $\beta(\psi) \coloneq \beta(\psi, \delta^*)$.

We conclude this section by showing existence of an optimal stopping policy.
\begin{lemma}
	There exists a stopping policy that attains the infimum in \eqref{eq:nonpa_kiwei_stopping}.
	\label{lm:existence}
\end{lemma}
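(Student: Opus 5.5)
We prove the lemma by a compactness argument on the space of stopping policies $\Ical^\infty = [0,1]^\infty$, which is compact in the product topology by Tychonoff's theorem. The feasible set is the set $\Ical_c^\infty$ characterized pointwise in Theorem~\ref{th:equivalence}, and the objective is the expression in \eqref{eq:cost_delta_opt} from Theorem~\ref{th:reduction}. We need two facts: $\Ical_c^\infty$ is closed and nonempty, and the objective is lower semicontinuous on it. Nonemptiness is trivial, since $\psi_0 \equiv 1$ gives $T=0$ and hence lies in $\Ical_c^\infty$ for every $c \geq 0$.

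Here each $\psi_t$ is a function of $\bm{x}_t$. We therefore regard a policy as a point of the product $\prod_{t}\prod_{\bm{x}_t}[0,1]$, indexed by all $t$ and all $\bm{x}_t \in \Xcal^t$. This is again compact by Tychonoff, but it requires nets rather than sequences, and the limit of a net of measurable policies need not be measurable. To avoid this, we first use the structure of the problem to restrict attention to policies that depend on $\bm{x}_t$ only through the likelihood ratio, or, more simply, we work with the induced stopping probabilities $\phi_t$.

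Closedness: for fixed $\bm{x}$,
\begin{equation*}
\Ebb_\psi[\tau(\bm{x})] = \sum_{t\geq 0} \prod_{n=0}^{t}\bigl(1-\psi_n(\bm{x}_n)\bigr),
\end{equation*}
a sum of nonnegative terms, each continuous in finitely many coordinates. It is therefore lower semicontinuous, so $\{\psi : \Ebb_\psi[\tau(\bm{x})]\leq c\}$ is closed. Intersecting over all $\bm{x}$ shows that $\Ical_c^\infty$ is closed, hence compact.

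Objective: write $\Lambda_t = z\prod_{n\le t} p_1(X_n)/p_0(X_n)$. Then
\begin{equation*}
\Ebb_{0,\psi}[g(\Lambda_T)] = \sum_t \Ebb_0[\phi_t\, g(\Lambda_t)],
\end{equation*}
with $\phi_t$ given by \eqref{eq:phi_t}. The uniform bound on $\Ebb_\psi[\tau(\bm{x})]$ implies $\Pbb_\psi[\tau(\bm{x}) > t] \leq c/t$ uniformly in $\bm{x}$, by Markov's inequality. Hence the tail $\sum_{t>N}\Ebb_0[\phi_t g(\Lambda_t)] \le \Pbb_{0,\psi}[T>N] \le c/N$ uniformly over $\Ical_c^\infty$, because $g\le 1$. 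The objective is thus a uniform limit of the finite sums $\sum_{t\le N}\Ebb_0[\phi_t g(\Lambda_t)]$. Each finite sum is continuous under pointwise convergence of $\psi_0,\dots,\psi_N$, by dominated convergence, since the integrands are bounded by $1$. A uniform limit of continuous functions is continuous, so the objective is continuous on $\Ical_c^\infty$. Taking a minimizing sequence and extracting a convergent subsequence then yields a minimizer.

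\textbf{Main obstacle.} The hard step is the compactness and measurability issue: pointwise limits along nets of measurable $\psi$ may fail to be measurable, and sequential compactness fails in the uncountable product. The first remedy to try is a weak-$*$ topology: view each $\psi_t$ as an element of the unit ball of $L^\infty(P_0^t)$, which is weak-$*$ compact and metrizable on bounded sets if $L^1$ is separable, or apply Komlós/Banach--Saks to obtain Cesàro-convergent subsequences. The catch is that the products in $\phi_t$ are not linear in $\psi$. This should be resolved by parametrizing instead by $\phi = (\phi_0,\phi_1,\dots)$, in which both the objective and the constraint $\sum_t t\,\phi_t(\bm{x}_t)\le c$, $\sum_t\phi_t(\bm{x}_t) = 1$ are linear. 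Weak-$*$ limits of convex combinations then preserve feasibility, at least $P_0$-a.e., and the limiting $\phi$ is modified on null sets, with care, so that the pointwise constraint holds everywhere. The policy $\psi$ is finally recovered from $\phi$ via \eqref{eq:phi_t}.
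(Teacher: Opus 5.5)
Your proposal has a genuine gap: the argument you actually carry out does not prove the lemma. The missing step is exactly the one you defer to your last paragraph.

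\textbf{Where the written argument fails.} Once $\psi_t$ is viewed as a function of $\bm{x}_t$, the ambient space $\prod_t\prod_{\bm{x}_t\in\Xcal^t}[0,1]$ is an uncountable product whenever $P_0$ is not purely atomic, as in the Gaussian example. Tychonoff gives compactness there only in terms of nets, and three things break. First, the space is not sequentially compact, so a minimizing sequence need not have a convergent subsequence. Second, a pointwise limit of measurable policies along a net need not be measurable, so the objective may not even be defined at the limit. Third, dominated convergence is unavailable along nets, so your continuity argument for the finite sums fails too.

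The suggested patches do not help. Restricting to policies that depend on $\bm{x}_t$ only through the likelihood ratio is not justified: the optimal rule in Theorem~\ref{th:optimal_stopping} depends on the likelihood-ratio history through $c_t$. Functions of the likelihood-ratio path, or the $\phi_t$ under the same pointwise topology, are still indexed by an uncountable set. So compactness of the feasible set, in a topology in which the objective is lower semicontinuous, is not established, and that is the entire content of the lemma.

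\textbf{How to close the gap.} The remedy you outline is the right one, and it is essentially the paper's route. The paper also passes to $\phi$, in which objective and constraint are linear. It then invokes sequential compactness of stopping-time distributions in the Baxter--Chacon topology, lower semicontinuity of the constraint as a supremum of the partial sums $\sum_{t\le n}t\,\phi_t$, and lower semicontinuity of the objective via Fatou's lemma. To complete your version:
\begin{enumerate}
\item Let $\phi_t$ range over $\{0\le\phi_t\le 1\}\subset L^\infty(P_0^t)$ with the weak-$*$ topology. The countable product is compact by Banach--Alaoglu and Tychonoff, and limits are measurable by construction. Compactness, not sequential compactness, is all the extreme value theorem needs.
\item Define $L_N\coloneq\sum_{t\le N}t\,\phi_t+(N+1)\bigl(1-\sum_{t\le N}\phi_t\bigr)$, which lower-bounds the expected stopping time of every continuation of $\bm{x}_N$. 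Impose $\sum_{t\le N}\phi_t\le 1$ and $L_N\le c$ a.e.\ for every $N$. These constraints are weak-$*$ closed and implied by feasibility. They force $\sum_t\phi_t=1$ and $\sum_t t\,\phi_t\le c$ a.e.\ in the limit, which the constraint $\sum_t\phi_t=1$ alone would not do, since it is not closed (mass can escape to infinity).
\item Your tail bound then carries over. Each $\Ebb_0[\phi_t\,g(\Lambda_t)]$ is weak-$*$ continuous because $g(\Lambda_t)$ is bounded, so the objective is continuous on the feasible set.
\item By Theorem~\ref{th:equivalence}, feasibility must hold for \emph{every} $\bm{x}$, not merely $P_0^\infty$-a.e.; this is the step you only gesture at. Put all remaining stopping mass at the first $N$ at which $\bm{x}_N$ falls into one of the null sets $\{L_N>c\}$, $\{\sum_{t\le N}\phi_t>1\}$, $\{\phi_N<0\}$. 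This gives an adapted $\tilde\phi$ that equals $\phi$ a.e., hence has the same cost. Its expected stopping time is $L_{N-1}\le c$ on paths modified at time $N$, and at most $\sup_N L_N\le c$ on all other paths.
\end{enumerate}
This null-set issue is, incidentally, not addressed in the paper's proof either. The paper treats $\phi\mapsto\sum_{t\le n}t\,\phi_t(\bm{x}_t)$ at a fixed $\bm{x}$ as a continuous functional.
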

Lemma~\ref{lm:existence} is proven in Appendix~\ref{apx:existence}.

\section{The Nonparametric Kiefer--Weiss Test} 
\label{sec:test}																			 

In this section, we derive the stopping policy that solves \eqref{eq:nonpa_kiwei_stopping}. To this end, a recurrence relation is established that leads to a characterization of the optimal stopping policy as the solution of a nonlinear integral equation of the Bellman type. This equation is derived by considering policies that are restricted in terms of how many \emph{randomized} stopping decision can be made during the test. We refer to this as the limited-randomness case. The solution to the original problem is then obtained by letting the number of allowed randomization uses go to infinity.

\subsection{Limited Randomness} %

Let $\Ical_{c,k}^\infty$ denote the set of policies that are feasible in the sense of \eqref{eq:stopping_policy_pointwise}, but whose use of randomization is limited to $k \in \Nbb_{\geq 0}$ instances:
\begin{equation*}
	\Ical_{c,k}^\infty \coloneq \biggl\{\psi \in \Ical_c^\infty : \sum_{t = 0}^T \bm{1}_{(0,1)}(\psi_t) \leq k \biggr\}.
\end{equation*}
We define the family of functions $\rho_k \colon \Rbb_{\geq 0}^2 \to [0,1]$ as
\begin{equation}
	\rho_k(z, c) \coloneq \inf_{\psi \in \Ical_{c,k}^\infty} \; \alpha(\psi) + z \, \beta(\psi).
	\label{eq:rho_k_def}
\end{equation}
Limiting the use of randomized stopping decisions causes the space $\Ical_{c,k}^\infty$ to be non-compact in general. Therefore, we cannot assume the existence of an optimal stopping policy. However, the optimal policy can be derived in a constructive manner. Before doing so, some useful properties of $\rho_k$ that follow directly from the definition in \eqref{eq:rho_k_def} are stated.
\begin{lemma}
	The function $\rho_k(z, c)$ defined in \eqref{eq:rho_k_def} is
	\begin{enumerate}
		\item non-decreasing and concave in $z$;
		\item non-increasing in $c$ and $k$; and
		\item upper-bounded by $g(z)$.
	\end{enumerate}
	\label{lm:rho_k_properties}
\end{lemma}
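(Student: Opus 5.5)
The plan is to read all three properties directly off the representation \eqref{eq:cost_delta_opt} from Theorem~\ref{th:reduction}. For a fixed stopping policy $\psi$, the objective is
\begin{equation*}
	J_\psi(z) \coloneq \alpha(\psi) + z\,\beta(\psi) = \Ebb_{0,\psi}\Bigl[ g\bigl(z\,\Lambda_T\bigr) \Bigr], \qquad \Lambda_T \coloneq \prod_{t=1}^T \frac{p_1(X_t)}{p_0(X_t)}.
\end{equation*}
The feasible set $\Ical_{c,k}^\infty$ does not depend on $z$; it depends only on $c$ and $k$. So $\rho_k(z,c)$ is an infimum over a $z$-independent family of functions of $z$, and each property reduces to a property of a single $J_\psi$ or to an inclusion between feasible sets.

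\emph{Property 1.} Fix $\psi$. The map $z \mapsto g(z\Lambda_T) = \min\{1, z\Lambda_T\}$ is a minimum of two affine functions of $z$, so pointwise it is concave and non-decreasing, since $\Lambda_T \ge 0$. Taking the expectation $\Ebb_{0,\psi}$ preserves both properties, so $J_\psi$ is concave and non-decreasing in $z$. It is simpler still to use the form $\alpha(\psi) + z\beta(\psi)$, which is affine in $z$ with nonnegative slope. A pointwise infimum of concave, non-decreasing functions is again concave and non-decreasing. This holds provided the feasible set is nonempty, which it is because $\psi_0 \equiv 1$ gives $T=0$ and lies in $\Ical_{c,k}^\infty$ for every $c \ge 0$ and $k \ge 0$.

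\emph{Property 2.} I would show the set inclusions $\Ical_{c,k}^\infty \subseteq \Ical_{c',k}^\infty$ for $c \le c'$ and $\Ical_{c,k}^\infty \subseteq \Ical_{c,k'}^\infty$ for $k \le k'$. Both are immediate: the constraint $\Ebb_\psi[\tau(\bm{x})] \le c$ from Theorem~\ref{th:equivalence} weakens as $c$ grows, and the randomization count constraint weakens as $k$ grows. An infimum over a larger set can only be smaller or equal, which gives monotonicity in $c$ and in $k$.

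\emph{Property 3.} I would evaluate the objective at the feasible policy $\psi_0 \equiv 1$. It is deterministic, so it uses no randomization, and $\tau \equiv 0 \le c$. Here $T=0$, so the empty product gives $\Lambda_0 = 1$ and the cost equals $g(z)$. Hence $\rho_k(z,c) \le g(z)$. The values lie in $[0,1]$ because $g \le 1$. The only subtlety, and the step most likely to need care, is confirming that the trivial policy belongs to every $\Ical_{c,k}^\infty$ including $c=0$; this ensures the infimum is never over an empty set. The rest is routine.
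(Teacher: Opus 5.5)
Your proposal is correct and follows essentially the same route as the paper: concavity and monotonicity in $z$ come from an infimum over the $z$-independent set $\Ical_{c,k}^\infty$ of concave, non-decreasing functions of $z$, and monotonicity in $c$ and $k$ comes from the nested feasible sets. The paper only asserts the latter, so your explicit inclusion argument fills that in.

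For the upper bound, the paper writes $\rho_k(z,c) \le \rho_k(z,0) = g(z)$, which amounts to your evaluation at the stop-immediately policy $\psi_0 \equiv 1$. Your pointwise argument via $g(z\Lambda_T)$ also handles the fact that $\alpha(\psi)$ and $\beta(\psi)$ depend on $z$ through $\delta^*$, which the paper's affine splitting leaves implicit.
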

Lemma~\ref{lm:rho_k_properties} is proven in Appendix~\ref{apx:rho_k_properties}. 

Now, assume that $k = 0$, meaning that the stopping policy is constrained to be deterministic. In this case, the constraint in \eqref{eq:stopping_policy_pointwise} becomes
\begin{equation*}
	\tau(\bm{x}) \leq c \quad \forall \bm{x} \in \Xcal^\infty,
\end{equation*}
that is, the underlying sequential test is \emph{truncated} after at most $c$ samples. In the next lemma, it is shown that for deterministic stopping rules the nonparametric Kiefer--Weiss test reduces to a fixed-sample size test.
\begin{lemma}
	For $k = 0$, it holds that
	\begin{equation*}
		\rho_0(z, c) = \Ebb_0\Biggl[ g\biggl(z \prod_{t=1}^{\lfloor c \rfloor} \frac{p_1(X_t)}{p_0(X_t)} \biggr) \Biggr].
	\end{equation*}
	Thus, for all $z \geq 0$, the function $\rho_0(z, \bullet)$ is piecewise constant with breakpoints in $\Nbb_{\geq 0}$.
	\label{lm:rho_0}
\end{lemma}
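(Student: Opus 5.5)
For $k = 0$ every $\psi_t$ takes values in $\{0,1\}$, so $\tau(\bm{x})$ is deterministic. Theorem~\ref{th:equivalence} then turns the feasibility constraint into $\tau(\bm{x}) \leq c$, hence $\tau(\bm{x}) \leq \lfloor c \rfloor$ for every $\bm{x} \in \Xcal^\infty$. Conversely, the fixed-sample-size policy with $\psi_t = 0$ for $t < \lfloor c \rfloor$ and $\psi_{\lfloor c \rfloor} = 1$ is deterministic and satisfies this bound, so it lies in $\Ical_{c,0}^\infty$. By Theorem~\ref{th:reduction}, its cost is exactly the displayed expression
\[
\Ebb_0\Bigl[g\bigl(z \textstyle\prod_{t=1}^{\lfloor c\rfloor} p_1(X_t)/p_0(X_t)\bigr)\Bigr],
\]
which therefore upper-bounds $\rho_0(z,c)$.

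For the matching lower bound, I would take an arbitrary deterministic feasible $\psi$ and compare its cost $\Ebb_{0,\psi}[g(zL_T)]$, where $L_t \coloneq \prod_{n=1}^t p_1(X_n)/p_0(X_n)$, with the cost of the truncated FSST. The key observation is that under $\Hcal_0$ the process $(zL_t)_{t}$ is a nonnegative martingale with respect to the natural filtration, because $\Ebb_0[p_1(X)/p_0(X)] = 1$ when $P_1 \ll P_0$. Since $g(y)=\min\{1,y\}$ is concave, $g(zL_t)$ is a supermartingale. $T$ is a stopping time bounded by $N \coloneq \lfloor c \rfloor$, and $T$ is deterministic given $\bm{X}$, so it is adapted to the data filtration. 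Optional stopping for bounded stopping times then gives
\[
\Ebb_0[g(zL_T)] \geq \Ebb_0[g(zL_N)].
\]
Equivalently, conditional Jensen can be applied on each event $\{T=t\}$ after noting that $\Ebb_0[L_N \mid \bm{X}_t] = L_t$. Thus no feasible deterministic policy beats the FSST, and equality holds.

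The piecewise-constancy claim follows immediately: the formula depends on $c$ only through $\lfloor c \rfloor$, which is constant on each interval $[n, n+1)$ with $n \in \Nbb_{\geq 0}$.

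The main obstacle is the careful justification of the supermartingale step. One must check that the martingale property holds in the presence of absolute continuity only in the direction $P_1 \ll P_0$; the expectation is taken under $P_0$, where $L_t$ is well defined almost surely, so this works. One must also handle the case $z=0$, which is trivial. Beyond that, the argument needs only the deterministic-to-truncation reduction via Theorem~\ref{th:equivalence}.
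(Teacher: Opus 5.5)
Your proof is correct and follows the same route as the paper: deterministic feasibility forces $\tau(\bm{x}) \le \lfloor c \rfloor$, and concavity of $g$ (Jensen) shows that stopping later never increases the cost, so the FSST with $\lfloor c \rfloor$ samples is optimal. Your supermartingale and optional-stopping argument is a more careful version of the paper's brief claim that the cost in \eqref{eq:cost_delta_opt} is non-increasing in $T$. You also spell out the achievability direction, which the paper leaves implicit.
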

\begin{proof}
	For stopping rules in $\Ical_{c, 0}^\infty$, it holds that $T \leq c$ since $T > c$ implies that $\Ebb_\psi[\tau(\bm{x})] = \tau(\bm{x}) > c$ for at least one $\bm{x} \in \Xcal^\infty$. From the concavity of $g$ together with Jensen's inequality, it further follows that the cost function on the right-hand side of \eqref{eq:cost_delta_opt} is non-increasing in $T$. Consequently, the minimum is attained at the largest feasible integer, that is, $T = \lfloor c \rfloor$. 
\end{proof} 
Next, we provide a recursive expression for $\rho_k$ when $k \geq 1$.
\begin{lemma}
	Define the function $r_k \colon \Rbb_{\geq 0} \times \Rbb_{\geq 1} \to [0, 1]$ as
	\begin{align}
		r_k(z, b) \coloneq \frac{1}{b}\left( g(z) - \Ebb_0\biggl[\rho_k\biggl(z \frac{p_1(X)}{p_0(X)}, b - 1\biggr) \biggr] \right).
		\label{eq:r_def}
	\end{align}
	For all $k \geq 1$ it holds that
	\begin{equation}
		\rho_k(z, c) = \begin{dcases}
			g(z) - c \, \max_{b \geq 1} \, r_{k-1}(z, b), & c < 1, \\[2pt]
			\min\biggl\{\Ebb_0\biggl[\rho_k\biggl( z \frac{p_1(X)}{p_0(X)}, c - 1\biggr)\biggr] \,,\, g(z) - c \, \max_{b \geq c} \, r_{k-1}(z, b) \biggr\}, & c \geq 1.
		\end{dcases}
		\label{eq:rho_k_recursive}
	\end{equation}
	\label{lm:rho_k_recursive}
\end{lemma}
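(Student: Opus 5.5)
We will prove the recursion by analysing the first stopping decision at time $t=0$, where no observations are available and $\psi_0$ is a constant. A policy in $\Ical_{c,k}^\infty$ does one of three things at time $0$: it stops ($\psi_0=1$, cost $g(z)$, feasible for every $c \geq 0$); it continues deterministically ($\psi_0=0$); or it randomizes with $\psi_0 = 1-q$, $q\in(0,1)$.

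In the deterministic-continuation case, the shifted policy $(\psi_1,\psi_2,\ldots)$, viewed as a function of $x_1$, must satisfy the pointwise constraint of Theorem~\ref{th:equivalence} with budget $c-1$. This requires $c \geq 1$ and uses at most $k$ randomizations. The cost in \eqref{eq:cost_delta_opt} factorizes by conditioning on $X_1$: the $\Hcal_0$-expectation of $g$ evaluated at the product of likelihood ratios becomes $\Ebb_0[\,\cdot\,]$ over $X_1$ of the cost of a continuation problem with prior weight $z\,p_1(x_1)/p_0(x_1)$. Optimizing the continuation separately for each $x_1$ gives $\Ebb_0[\rho_k(z\,p_1(X)/p_0(X), c-1)]$. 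Two points need a brief justification. First, measurable $\varepsilon$-optimal selection over $x_1$ is needed because infima need not be attained; since $\rho_k$ is monotone and concave in $z$, hence continuous, one can take near-optimal policies on a countable grid of $z$ values. Second, conversely, any feasible continuation policy yields a feasible policy for the original problem.

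In the randomized case, with probability $1-q$ the test stops at cost $g(z)$. With probability $q$ it continues, and this branch uses one randomization, so at most $k-1$ remain. The pointwise budget becomes $q\,\Ebb[\tau'(\bm{x})] \leq c$, where $\tau'$ is the stopping time of the continuation counted from time $0$; $\tau' \geq 1$. Let $b \geq 1$ be the smallest budget satisfying $\sup_{\bm{x}} \Ebb[\tau'(\bm{x})] \leq b$. Conditioning on $X_1$ as before, the best continuation cost is $\Ebb_0[\rho_{k-1}(z\,p_1(X)/p_0(X), b-1)]$. The total cost is then
\[
(1-q)g(z) + q\,\Ebb_0\bigl[\rho_{k-1}(\cdot, b-1)\bigr] = g(z) - q\,b\,r_{k-1}(z,b),
\]
subject to $q b \leq c$ and $q<1$. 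Since $r_{k-1} \geq 0$, because $\rho \leq g$ and $g$ is concave so Jensen applies, it is optimal to take $q = c/b$. This requires $b > c$, which is automatic when $c<1 \leq b$. We therefore obtain $g(z) - c\,\sup_{b} r_{k-1}(z,b)$, with $b \geq 1$ for $c<1$ and $b>c$ for $c\geq1$. The boundary value $b=c$, corresponding to $q=1$, reproduces a deterministic continuation with budget $c-1$ and $k-1$ randomizations. Its cost is at least the first term of the minimum by Lemma~\ref{lm:rho_k_properties} (monotonicity in $k$), so including it does not change the minimum and we may write $b\geq c$. The stop option $g(z)$ corresponds to $q=0$ and is dominated since $r \geq 0$.

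Combining the three cases yields \eqref{eq:rho_k_recursive}. For $c<1$ only stopping or randomizing is feasible, and the randomized expression covers stopping.

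The main obstacle is justifying that the supremum over $b$ is a maximum. We expect $b \mapsto b\,r_{k-1}(z,b)$ to be piecewise well behaved. By induction with Lemma~\ref{lm:rho_0}, $\rho_{k-1}(\cdot, b-1)$ is right-continuous and piecewise in $b$, and $r_{k-1}(z,b)\to0$ as $b\to\infty$ because $\rho \in [0,1]$. Together these give attainment on a compact range of $b$, possibly after arguing upper semicontinuity of $\rho_{k-1}$ in $c$ from the non-increasing property. A secondary technicality is making the "smallest feasible budget $b$" decomposition rigorous: we must show that restricting the continuation to budget exactly $b$ loses nothing. This holds because its cost is at least $\rho_{k-1}(\cdot, b-1)$ by definition.
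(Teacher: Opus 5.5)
Your proposal is correct and follows the paper's proof essentially step for step. It uses the same three-way split on $\psi_0$, the same shift-and-condition-on-$X_1$ argument giving $\Ebb_0[\rho_k(z\frac{p_1(X)}{p_0(X)},c-1)]$, the same budget relation (your $qb\le c$ is the paper's $b\le c/(1-\eta)$, and your choice $q=c/b$ mirrors its choice $b=c/(1-\eta)$), and the same use of monotonicity in $k$ to admit $b=c$. Your handling of attainment of the maximum is more careful than the paper's nonnegativity-plus-decay remark, but one property is misnamed: what you need is right-continuity of $\rho_{k-1}(z,\bullet)$ (obtainable by induction from the recursion), which together with monotonicity makes $r_{k-1}(z,\bullet)$ upper semicontinuous, not upper semicontinuity of $\rho_{k-1}$ itself.
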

Lemma~\ref{lm:rho_k_recursive} is proven in Appendix~\ref{apx:rho_k_recursive}. It completely specifies the optimal nonparametric Kiefer--Weiss test with at most $k$ randomized stopping decisions. More specifically, given $\rho_{k-1}$ for some $k \geq 0$, $\rho_k(z, \bullet)$ can be calculated on the interval $[0, 1)$ via the first case in \eqref{eq:rho_k_recursive}. Given $\rho_k(z, \bullet)$ on $[0, 1)$, it can be evaluated on $[1, 2)$ via the second case in \eqref{eq:rho_k_recursive}. Continuing in this manner, $\rho_k(z, \bullet)$ can be pieced together on $\Rbb_{\geq 0}$ and can in turn be used to calculate $\rho_{k+1}$. The corresponding optimal stopping rules can be obtained from the arguments that maximize/minimize the respective terms in \eqref{eq:rho_k_recursive}. This result is made formal in the next theorem. To simplify its statement, we define the following condition:
\begin{equation}
	\Ebb_0\Bigl[\rho_k\Bigl( z \frac{p_1(X)}{p_0(X)}, c - 1\Bigr)\Bigr] \leq g(z) - c \, \max_{b \geq c} \, r_{k-1}(z, b).
	\label{eq:cont_condition}
\end{equation}
This condition is true if the minimum in the case $c \geq 1$ in \eqref{eq:rho_k_recursive} is attained by its first argument.

\begin{theorem}
	Let $(z_t, c_t, k_t) \in \Rbb_{\geq 0}^2 \times \Nbb_{\geq 0}$ be a sequential test statistic with initial value $(z_0, c_0, k_0) = (z, c, k)$ and update rule
	\begin{align*}
		z_{t+1} &= z_t \, \frac{p_1(x_{t+1})}{p_0(x_{t+1})},\\
		c_{t+1} &= b_{k_t}^*(z_t, c_t) - 1,\\
		k_{t-1} &= \begin{cases}
			k_t, & b_{k_t}^*(z_t, c_t) = c_t, \\
			k_t -1, & b_{k_t}^*(z_t, c_t) > c_t,
		\end{cases}
	\end{align*}
	where $b_k^*$ is given by
	\begin{equation}
		b_k^*(z, c) = \begin{dcases}
			c, & (k = 0) \ \text{or} \ (c \geq 1 \ \text{and} \ \eqref{eq:cont_condition} \ \text{is true}), \\
			\argmax_{b \geq 1} \, r_{k-1}(z, b), &  \text{otherwise}.
			\label{eq:b_opt_k}
		\end{dcases}
	\end{equation}
	The stopping rule
	\begin{align}
		\psi_{k,t}^* = 1 - \frac{c_t}{b_{k_t}^*(z_t, c_t)}
		\label{eq:stopping_rule_k}
	\end{align}
	is optimal in the sense of \eqref{eq:rho_k_def}.
	\label{th:optimal_stopping_k}
\end{theorem}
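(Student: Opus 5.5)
We prove Theorem~\ref{th:optimal_stopping_k} by induction on $k$, reading the policy off the recursion in Lemma~\ref{lm:rho_k_recursive}. The central object is the \emph{budget} $c_t$: the maximal expected number of further samples the policy may take, uniformly over continuation paths. The constraint \eqref{eq:stopping_policy_pointwise} is preserved stage by stage if, after surviving a stopping decision with continuation probability $1-\psi_t$, the test assigns the next stage the budget $b-1$, where $(1-\psi_t)\,b = c_t$. This explains the update $c_{t+1} = b^*-1$ and the rule $\psi^*_{k,t} = 1 - c_t/b^*$ in \eqref{eq:stopping_rule_k}.

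\textbf{Base case and feasibility.} For $k=0$, $b_0^* = c$ always, so $\psi^*_{0,t} = 0$ while $c_t \geq 1$. Once $c_t<1$ we have $b^*=c_t$, which gives $\psi=0$ rather than the intended stop; I would read the convention as $\psi=1$ when $c_t<1$ and $k_t=0$, i.e.\ the test stops at $\lfloor c\rfloor$. This reproduces Lemma~\ref{lm:rho_0}. For general $k$, feasibility follows by backward accounting. With $E(c_t)$ the conditional remaining expected sample size, we have $E(c_t) = (1-\psi_t)(1 + E(c_{t+1})) \le (1-\psi_t)\,b^* = c_t$, using $E(c_{t+1}) \le c_{t+1} = b^*-1$ inductively. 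Each randomized step ($b^*>c_t$, so $\psi\in(0,1)$) decrements $k_t$, so at most $k$ randomizations occur. The typo $k_{t-1}$ should read $k_{t+1}$.

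\textbf{Optimality.} Fix $k\ge1$ and assume the claim for $k-1$. At time $0$, the proof of Lemma~\ref{lm:rho_k_recursive} shows that an optimal (or $\epsilon$-optimal) policy either continues deterministically with budget $c-1$, giving cost $\Ebb_0[\rho_k(zL,c-1)]$, or randomizes. In the latter case it stops with probability $1-c/b$ and otherwise proceeds with budget $b-1$ and $k-1$ remaining randomizations. The resulting cost is
\[
(1-c/b)\,g(z) + (c/b)\,\Ebb_0[\rho_{k-1}(zL,b-1)] = g(z) - c\, r_{k-1}(z,b).
\]
The continuation stage is a fresh instance of the same problem with updated parameters $(z_1,c_1,k_1)$, because $g(z\prod L)$ factorizes through the statistic $z_t$. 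Therefore the time-$0$ choice of $b^*$ in \eqref{eq:b_opt_k} attains the minimum in \eqref{eq:rho_k_recursive}. Applying the same argument at the updated statistic (induction on $k$ for randomizing branches, and induction on $\lfloor c\rfloor$ for deterministic continuation branches) shows that the policy's cost equals $\rho_k(z,c)$.

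\textbf{Main obstacle.} The hard part is attainment. The argmax over $b \ge 1$ in $r_{k-1}(z,\cdot)$ must exist, and the paper notes that $\Ical_{c,k}^\infty$ is non-compact. I would show that $b\mapsto r_{k-1}(z,b)$ is upper semicontinuous and tends to $0$ as $b\to\infty$, since its numerator is bounded by $1$ and the factor is $1/b$. Upper semicontinuity should follow from $\rho_{k-1}(z,\cdot)$ being non-increasing (Lemma~\ref{lm:rho_k_properties}) and right-continuous in $c$, which must be verified from the recursion. For $k-1=0$ this holds because $\rho_0$ is piecewise constant by Lemma~\ref{lm:rho_0}, and it then propagates inductively through \eqref{eq:rho_k_recursive}. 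A supremum that is positive is then attained on a compact set. If the supremum is $0$, stopping is optimal, and one takes $\psi=1$ directly.
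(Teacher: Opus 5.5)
Your proposal follows the paper's argument: you read the optimal first action off the recursion in Lemma~\ref{lm:rho_k_recursive} (continue deterministically, or randomize with $\psi_0 = 1 - c/b$ and pass budget $b-1$ and $k-1$ randomizations to the continuation), then re-solve the same problem at the updated statistic, and your additions make explicit what the paper leaves implicit: the feasibility accounting, the lexicographic induction on $(k,\lfloor c\rfloor)$, the stopping convention for $k_t=0$, $c_t<1$, and attainment of the argmax via upper semicontinuity. One step that you and the paper both skip is that for $c\ge 1$ the recursion \eqref{eq:rho_k_recursive} maximizes $r_{k-1}(z,\cdot)$ over $b\ge c$ while \eqref{eq:b_opt_k} takes $\argmax_{b\ge 1}$, so you must either show this global maximizer exceeds $c$ whenever \eqref{eq:cont_condition} fails or read the argmax over $b\ge\max\{1,c\}$, since otherwise $\psi_{k,t}^*$ could be negative.
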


\begin{proof}
	The theorem follows directly from the proof of Lemma~\ref{lm:rho_k_recursive} in Appendix~\ref{apx:rho_k_recursive}. There, it is shown that the optimal initial stopping rule is deterministic if either $k = 0$ or $c \geq 1$ and the condition in \eqref{eq:cont_condition} is true. In all other cases, the optimal stopping rule is randomized and of the form $1 - \frac{c}{b}$. Optimizing over $b$ yields \eqref{eq:stopping_rule_k} in the theorem. Given that the test did not stop, it continues with an optimal policy for the new parameters $z \leftarrow z \frac{p_1(x_1)}{p_0(x_1)}$, $c \leftarrow b_k^*(z, c) - 1$ and $k \leftarrow k - \bm{1}_{(0,1)}(\psi_0^*)$, where $\psi_0^* \in (0,1)$ if and only if $b_k^*(z, c) > c$. Proceeding in this manner gives the statement in the theorem.
\end{proof}

The three quantities tracked by the test statistic in Theorem~\ref{th:optimal_stopping_k} are the likelihood ratio, $z_t$, the maximum expected remaining sample size, $c_t$, and the number of remaining randomization uses, $k_t$. The underlying idea is that randomized stopping decisions allow the test to increase the number of samples in some runs at the expense of stopping early in others. By carefully choosing when and how to randomize, this policy can achieve lower error probabilities than a truncated SPRT or an FSST. A more detailed discussion of the optimal policy is deferred to Section~\ref{sec:discussion}.  

Although its optimal stopping policy can be calculated explicitly, the limited-randomness case is somewhat unsatisfying, both from a theoretical and a practical perspective. The constraint on the use of randomization is arguably artificial and unlikely to arise in practice. Moreover, recursive calculation of $\rho_k$ via \eqref{eq:rho_k_recursive} is computationally expensive and error prone. In general, $\rho_k$ is non-convex and discontinuous, making it hard to approximate it numerically and to solve the maximization problems in \eqref{eq:rho_k_recursive}---numerical examples will be shown in Section~\ref{sec:discussion}. Interestingly, these complications disappear as $k \to \infty$. This limit is investigated in the next section.

\subsection{Unlimited Randomness} %

The unlimited randomness case can be recovered from the limited randomness case by letting $k \to \infty$. In the next lemma, we show that $\rho_k$ converges to the minimum cost of the original problem in \eqref{eq:nonpa_kiwei_stopping}.
\begin{lemma}
	The sequence $( \rho_k )_{k \geq 0}$ converges pointwise to the minimum in \eqref{eq:nonpa_kiwei_stopping}:
	\begin{equation*}
		\lim_{k \to \infty} \rho_k(z, c) = \rho(z, c) = \min_{\psi \in \Ical_c^\infty} \; \alpha(\psi) + z \, \beta(\psi)
	\end{equation*}
	\label{lm:rho_limit}
\end{lemma}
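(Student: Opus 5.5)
Since $\Ical_{c,k}^\infty \subseteq \Ical_{c,k+1}^\infty \subseteq \Ical_c^\infty$ for all $k$, Lemma~\ref{lm:rho_k_properties} shows that $(\rho_k(z,c))_{k\geq 0}$ is non-increasing in $k$ and bounded below by $\rho(z,c)$. Hence the limit $\lim_{k\to\infty}\rho_k(z,c)$ exists and is at least $\rho(z,c)$. By Lemma~\ref{lm:existence}, the infimum in \eqref{eq:nonpa_kiwei_stopping} is attained, so writing $\rho(z,c)$ as a minimum is justified. It remains to prove the reverse inequality $\lim_k \rho_k(z,c) \leq \rho(z,c)$. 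I plan to do this by approximation: for every $\varepsilon>0$, I will construct a $k$ and a policy $\psi^{(k)} \in \Ical_{c,k}^\infty$ whose cost is within $\varepsilon$ of that of an optimal policy $\psi^*\in\Ical_c^\infty$.

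The construction goes in two steps. First, I truncate in time. I fix a horizon $N$ and set $\psi^N_t = \psi^*_t$ for $t<N$ and $\psi^N_N = 1$. Stopping earlier cannot increase any conditional expected sample size, so $\psi^N$ is still feasible in the sense of \eqref{eq:stopping_policy_pointwise}. Its cost differs from that of $\psi^*$ by at most $(1+z)\,\Pbb_{0,\psi^*}[T>N]$, because $0\le g\le 1$ and likewise under $\Hcal_1$. By Markov's inequality this is at most $(1+z)c/N$, using Theorem~\ref{th:equivalence}. Second, I limit randomization. A run of $\psi^N$ has at most $N+1$ stopping decisions, so in principle $k=N+1$ randomizations suffice. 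The catch is that $\Ical_{c,k}^\infty$ counts the values $\psi_t\in(0,1)$ along a run, and a truncated policy has at most $N+1$ time indices. So $\psi^N$ itself already satisfies $\sum_{t=0}^T \bm{1}_{(0,1)}(\psi^N_t)\le N+1$ pointwise. Hence $\psi^N\in\Ical_{c,N+1}^\infty$, and therefore $\rho_{N+1}(z,c)\le \rho(z,c)+(1+z)c/N$. Letting $N\to\infty$ gives the claim.

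The main obstacle is the first step, specifically the feasibility of the truncated policy and the bound on the cost perturbation. For feasibility, I will argue pathwise: for each fixed $\bm{x}$, truncation replaces $\tau(\bm{x})$ by $\min\{\tau(\bm{x}),N\}$, which reduces $\Ebb_\psi[\tau(\bm{x})]$. For the cost, I will use \eqref{eq:cost_delta_opt} and the analogous representation under $\Hcal_1$. The two policies agree on the event $\{T\le N\}$. On the complement, the contribution of each to $\alpha+z\beta$ is bounded by $\Pbb_0[T>N]+z\Pbb_1[T>N]$, and both probabilities are at most $c/N$ because $\Ebb_{i,\psi^*}[T]\le c$. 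If the randomization count in the definition of $\Ical_{c,k}^\infty$ is meant in a different, more restrictive sense than the pointwise count along a run, I would instead discretize each $\psi^*_t$ to a finite grid. Continuity of the cost in each finitely many $\psi_t$, combined with dominated convergence, would then close the gap.
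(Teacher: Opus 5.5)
Your proof is correct. It shares the outer frame of the paper's proof: a non-increasing sequence bounded below by the infimum over $\Ical_c^\infty$, with Lemma~\ref{lm:existence} used only to write $\min$. The key step, however, is handled differently.

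The paper simply invokes $\Ical_{c,k}^\infty \uparrow \Ical_c^\infty$ and concludes that the limit is the infimum. Read literally, $\bigcup_k \Ical_{c,k}^\infty$ contains only policies whose per-run randomization count is uniformly bounded. That excludes policies such as the NPKWT of Theorem~\ref{th:optimal_stopping}; compare the all-ones sequence in Table~\ref{tab:continuation_probabilities}, where it keeps randomizing while continuing with positive probability. So equality of the infima needs a density argument that the paper leaves implicit.

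Your time truncation supplies exactly that argument:
\begin{itemize}
\item $\min\{\tau(\bm{x}),N\}\le\tau(\bm{x})$ preserves feasibility pathwise;
\item the truncated policy can randomize only at times $0,\dots,N-1$;
\item Markov's inequality controls the cost loss.
\end{itemize}

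Your route also gives a rate. Via \eqref{eq:cost_delta_opt} and $0\le g\le 1$, the loss is at most $\Pbb_{0,\psi^*}[T>N]\le c/(N+1)$, so $\rho_N(z,c)\le\rho(z,c)+c/(N+1)$.

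Moreover, running the same argument with an $\varepsilon$-optimal policy instead of $\psi^*$ shows that $\lim_k\rho_k(z,c)$ equals $\inf_{\psi\in\Ical_c^\infty}\alpha(\psi)+z\,\beta(\psi)$ without appealing to Lemma~\ref{lm:existence}. That lemma is then needed only for attainment of the minimum.

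You can drop the hedge in your final sentence. Under the paper's per-run count, your truncated policy qualifies directly. Discretizing $\psi_t^*$ to a grid would not reduce the number of values in $(0,1)$ anyway.
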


\begin{proof}
	Let $z, c \geq 0$ be given. Since the sequence $(\rho_k(z,c))_{k \geq 0}$ is non-increasing and bounded from below, it converges to a unique limit $\rho(z,c)$. Moreover, since $\Ical_{c,k}^\infty \uparrow \Ical_c^\infty$, this limit is the infimum in \eqref{eq:nonpa_kiwei_stopping}. The existence of the corresponding minimizer was shown in Lemma~\ref{lm:existence}. This argument holds for all $z, c \geq 0$.
\end{proof}

Next, we characterize $\rho$ in terms of an integral equation of the Bellman type.
\begin{lemma}
	The function $\rho$ defined in Lemma~\eqref{lm:rho_limit} is the unique solution of the integral equation
	\begin{equation}
		\frac{g(z) - \rho(z, c)}{c} = \max_{b \geq \max\{1,c\}} \frac{g(z) - \Ebb_0\bigl[\rho\bigl(z\frac{p_1(X)}{p_0(X)}, b - 1\bigr)\bigr]}{b}
		\label{eq:rho_bellman}
	\end{equation}
	with boundary condition $\rho(z, 0) = g(z)$ on $\Rbb_{\geq 0}^2$.
	\label{lm:integral_equation}
\end{lemma}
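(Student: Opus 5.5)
We plan to obtain the Bellman equation by passing to the limit $k \to \infty$ in the recursion of Lemma~\ref{lm:rho_k_recursive}, and then to prove uniqueness separately with a comparison argument.

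\emph{Rewriting the recursion.} First we put \eqref{eq:rho_k_recursive} in a single form. For $c<1$ it reads $(g(z)-\rho_k(z,c))/c = \max_{b\ge 1} r_{k-1}(z,b)$. For $c \geq 1$, the first argument of the minimum equals $g(z) - c\, r_k(z,c)$. Hence, for all $c>0$,
\begin{equation*}
	\frac{g(z)-\rho_k(z,c)}{c} = \max\Bigl\{ r_k(z,c)\bm{1}_{\{c\ge 1\}},\ \sup_{b\ge\max\{1,c\}} r_{k-1}(z,b)\Bigr\}.
\end{equation*}
The boundary condition $\rho(z,0)=g(z)$ is immediate from Lemma~\ref{lm:rho_0}, since $\rho_k(z,0)=g(z)$ for every $k$ (only $T=0$ is feasible).

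\emph{Passing to the limit.} By Lemma~\ref{lm:rho_k_properties}, $\rho_k \downarrow \rho$ pointwise (Lemma~\ref{lm:rho_limit}) and $0 \le \rho_k \le 1$. Monotone convergence then gives $r_k(z,b) \uparrow r(z,b)$, where $r$ is defined as in \eqref{eq:r_def} with $\rho$ in place of $\rho_k$. For increasing sequences, the limit and the supremum commute: $\lim_k \sup_b r_{k-1}(z,b) = \sup_b r(z,b)$. Also $r_k(z,c)\le \sup_{b\ge c} r_{k}(z,b)$. Taking $k\to\infty$ in the display above therefore yields
\begin{equation*}
	\frac{g(z)-\rho(z,c)}{c} = \sup_{b \geq \max\{1,c\}} r(z,b).
\end{equation*}
The case $c \geq 1$ uses the same bound $r_k(z,c)\le\sup_{b\ge c}r_k(z,b)$, so the first branch is absorbed into the supremum in the limit.

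\emph{Attainment of the supremum.} Next we show that the supremum is a maximum. Because $\rho \geq 0$, we have $r(z,b) \le g(z)/b \to 0$ as $b\to\infty$. The supremum is strictly positive unless it is trivially attained, so the sup can be restricted to a compact interval. It then remains to show that $b\mapsto \Ebb_0[\rho(z\,p_1/p_0,\,b-1)]$ is lower semicontinuous, which gives upper semicontinuity of $r$ and hence attainment. We intend to derive this from the existence argument of Lemma~\ref{lm:existence}, via compactness of the policy set and continuity of the cost. Alternatively, we will use the form of the Bellman equation itself to show that $\rho(z,\cdot)$ is convex and continuous in $c$ for $c > 0$: the right-hand side is $\sup_b$ of functions that, after multiplication by $c$, are linear in $c$.

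\emph{Uniqueness.} Let $\tilde\rho$ be another bounded solution with the same boundary condition, and set $\Delta(c)=\sup_z|\rho(z,c)-\tilde\rho(z,c)|$. Using $|\sup f - \sup h| \le \sup|f-h|$, the equation gives
\begin{equation*}
	\Delta(c) \le c \sup_{b\ge\max\{1,c\}} \frac{\Delta(b-1)}{b}.
\end{equation*}
This bound is not immediately contractive, because $b-1$ may exceed $c$. We plan two ways around this.

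One is to interpret any solution as the value of the stopping problem via a verification argument. Iterating the equation produces a randomized policy that attains $\tilde\rho$, so $\tilde\rho\ge\rho$. Conversely, every policy in $\Ical_c^\infty$ satisfies the recursion as an inequality, which gives $\tilde\rho\le$ the cost of any policy.

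The other is an induction on $k$: show that $\tilde\rho \le \rho_k$ for all $k$ by comparing with the recursion of Lemma~\ref{lm:rho_k_recursive}, and establish the reverse inequality by constructing a policy from the maximizers. This reduces uniqueness to showing that the feasible policy built from maximizers of the equation achieves cost $\tilde\rho$. The delicate point there is that such a policy has expected sample size at most $c$ pointwise in $\bm{x}$, as required by Theorem~\ref{th:equivalence}, and that the cost telescopes correctly over an unbounded horizon. We expect this step to be the main obstacle, and we would handle it by truncating at a large horizon and using $g\le 1$ to control the tail.
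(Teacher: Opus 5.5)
Your derivation of \eqref{eq:rho_bellman} is the paper's own argument. The paper also lets $k\to\infty$ in \eqref{eq:rho_k_recursive} and drops the first branch of the minimum because it equals the second expression at $b=c$. Your monotone-convergence and limit/supremum interchange make rigorous what the paper does in one line. The paper's proof ends there: it gives no argument that the maximum is attained, and none for uniqueness. So those two parts of your proposal are additions rather than an alternative route, and the uniqueness part has a hole.

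The verification idea is sound for bounded solutions; the tail is controlled because $\Ebb_0[T]\le c$ forces $T<\infty$ a.s. However, the claim that every policy in $\Ical_c^\infty$ satisfies the recursion as an inequality does not follow from the equation alone. Take a general feasible $\psi$ and define the running budget $c_t(\bm{x}_t)$ as the supremum over continuations of the conditional expected remaining sample size, with $b_t=c_t/(1-\psi_t)$. Then you only have $c_{t+1}(\bm{x}_t,x)\le b_t-1$, with equality holding merely for the supremum over $x$. Also, $c_0$ may lie strictly below $c$. The equation, by contrast, controls $\Ebb_0\bigl[\tilde\rho\bigl(z\tfrac{p_1(X)}{p_0(X)},b-1\bigr)\bigr]$ only at the single budget $b-1$. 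So the submartingale step needs $\tilde\rho(z,\bullet)$ to be non-increasing, and that is not among your hypotheses on $\tilde\rho$.

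It can be proved as follows. Let $\tilde r$ be defined as in \eqref{eq:r_def} with $\tilde\rho$ in place of $\rho_k$. Boundedness gives $\max_b\tilde r(z,b)\ge0$, hence $\tilde\rho\le g$. By Jensen, $\Ebb_0\bigl[\tilde\rho\bigl(z\tfrac{p_1(X)}{p_0(X)},\bullet\bigr)\bigr]\le g(z)$, so $\tilde r\ge0$. Monotonicity on $[0,1]$ is then immediate. An induction on $\lfloor c\rfloor$ extends it, because for $1\le c<c'$ and $b\in[c,c')$ one has $c\,\tilde r(z,b)\le g(z)-\Ebb_0\bigl[\tilde\rho\bigl(z\tfrac{p_1(X)}{p_0(X)},b-1\bigr)\bigr]\le c'\,\tilde r(z,c')$, while for $b\ge c'$ one has trivially $c\,\tilde r(z,b)\le c'\,\tilde r(z,b)$.

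Alternatively, you can sidestep monotonicity. Test $\tilde\rho$ only against budget-exact policies, for which the inequality can be read off the equation directly. In particular, test it against the maximizer policy of $\rho$ itself, whose cost equals $\rho$ by the same martingale argument; this gives $\tilde\rho\le\rho$.

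Finally, drop your ``alternative'' continuity argument. The expression $g(z)-c\sup_b\tilde r$ is an infimum of affine functions of $c$, which would be concave, not convex, even if the index set did not depend on $c$. Convexity comes from the mixing argument in Lemma~\ref{lm:rho_properties}, and the $c<1$ branch gives continuity at $c=0$.
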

Lemma~\ref{lm:integral_equation} is proven in Appendix~\ref{apx:integral_equation}.

Comparing \eqref{eq:rho_bellman} to \eqref{eq:rho_k_recursive}, it appears that $\rho$ is smoother than $\rho_k$ in the sense that it requires fewer case-by-case comparisons and is not explicitly derived from the discontinuous function $\rho_0$. However, given its derivation, one would still expect $\rho$ to show remnants of discontinuity. This intuition is formalized in the next lemma.

\begin{lemma}
	The function $\rho$ that solves \eqref{eq:rho_bellman} has all properties listed in Lemma~\ref{lm:rho_k_properties}. Moreover, for all $z \geq 0$ the function $\rho(z, \bullet)$ is convex and piecewise linear with breakpoints in $\Nbb_{\geq 0}$.
	\label{lm:rho_properties}
\end{lemma}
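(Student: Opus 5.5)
The plan has two parts: inherit the properties of Lemma~\ref{lm:rho_k_properties} by passing to the limit, then establish convexity and piecewise linearity in $c$ through the structure of the Bellman equation \eqref{eq:rho_bellman}.

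\textbf{Inherited properties.} By Lemma~\ref{lm:rho_limit}, $\rho = \lim_k \rho_k$ pointwise, and each of the following properties is preserved under pointwise limits.
\begin{itemize}
\item Monotonicity in $z$ and in $c$ passes to the limit directly.
\item Concavity in $z$ passes to the limit, since a pointwise limit of concave functions is concave.
\item The bound $\rho \le g(z)$ follows from $\rho_k \le g(z)$ for every $k$.
\item Monotonicity in $k$ is trivial for $\rho$, because $\rho$ no longer depends on $k$.
\end{itemize}
Alternatively, every property except monotonicity in $k$ can be read off directly from the characterization $\rho(z,c)=\min_{\psi\in\Ical_c^\infty}\alpha(\psi)+z\beta(\psi)$. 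This is a minimum of affine functions of $z$, hence concave; it is non-decreasing since $\alpha,\beta\ge0$. The feasible sets $\Ical_c^\infty$ grow with $c$, so $\rho$ is non-increasing in $c$.

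\textbf{Convexity in $c$.} I will argue directly from the problem rather than the recursion. Given $c_1,c_2$, $\lambda\in[0,1]$ and optimal policies $\psi^{(1)},\psi^{(2)}$ (which exist by Lemma~\ref{lm:existence}), define a mixed policy that at time $0$ flips a coin and follows $\psi^{(1)}$ with probability $\lambda$ and $\psi^{(2)}$ otherwise. Since $\psi_0$ is a single number, this mixing has to be encoded inside the stopping-policy framework. The cleanest route is to note that the cost, written as $\Ebb_0[g(z L_T)]$ with $L_T$ the likelihood ratio at time $T$, and the pointwise constraint $\Ebb_\psi[\tau(\bm x)]$ both depend only on the conditional law of $\tau(\bm x)$ given $\bm x$. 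That law is an arbitrary adapted sub-probability sequence $\phi_t(\bm x_t)$ as in \eqref{eq:phi_t}, and the set of such $\phi$ is convex. Working in $\phi$-coordinates, the cost and the constraint are both linear in $\phi$. Hence $\lambda\phi^{(1)}+(1-\lambda)\phi^{(2)}$ is feasible for the mixed budget $\lambda c_1+(1-\lambda)c_2$ and attains the mixed cost. This gives $\rho(z,\lambda c_1+(1-\lambda)c_2)\le\lambda\rho(z,c_1)+(1-\lambda)\rho(z,c_2)$. The one technical point is checking that the map $\psi\leftrightarrow\phi$ is onto all adapted $\phi$ with $\sum_t\phi_t\le1$, including the case where the total stopping probability is less than one; but infinite running is excluded by the finite expected sample size.

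\textbf{Piecewise linearity.} This is the main obstacle. I will use \eqref{eq:rho_bellman} and induct over the intervals $[n,n+1)$.
\begin{itemize}
\item For $c\in[0,1)$, the maximization is over $b\ge1$, which does not depend on $c$, so $\rho(z,c)=g(z)-cM(z)$ is linear on $[0,1]$.
\item For $c\in[n,n+1]$, write $h(b)=g(z)-\Ebb_0[\rho(z\tfrac{p_1(X)}{p_0(X)},b-1)]$. By the induction hypothesis applied to the shifted argument $b-1$, $h$ is concave and piecewise linear with integer breakpoints. On $[c,\infty)$, the supremum of $h(b)/b$ is attained either at $b=c$, giving $\rho(z,c)=\Ebb_0[\rho(\cdot,c-1)]$, which is linear on $[n,n+1]$ by induction, or at some $b^*>c$ independent of $c$, giving a linear function of $c$.
\item Because $h(b)/b$ is a ratio of a concave piecewise-linear function and $b$, each linear piece of $h$ makes $h(b)/b$ monotone in $b$. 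The maximizer over $b\ge c$ can therefore be taken at $c$ or at an integer.
\end{itemize}
It remains to show that the switch between the two regimes happens only at integers, and this is where convexity is essential. Consider the concave majorant of the line from $(0,g(z))$ through the points $(b,\Ebb_0[\rho(\cdot,b-1)])$. The function $\rho(z,\cdot)$ is the lower convex envelope of the values at integers: the $c$-linear branch is the chord from $0$ to $b^*$, and convexity together with integer kinks of the $b=c$ branch forces every switch point to be an integer. I expect verifying this envelope description carefully to be the hardest step.
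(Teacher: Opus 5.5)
Your plan is sound and, for piecewise linearity, takes a genuinely different route from the paper. The one step you leave open (the ``envelope description'') closes in a line using the convexity you have already proved, so the envelope picture can be dropped.

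Fix $n\ge 1$ and $c\in[n,n+1]$, and put $S_{n+1}=\sup_{b\ge n+1}h(b)/b$, which does not depend on $c$. At this stage you only know that $h$ is affine on $[n,n+1]$, and nothing more about $h$ beyond $n+1$ is needed. If $h(b)/b$ is non-decreasing on $[n,n+1]$, then $\rho(z,c)=g(z)-cS_{n+1}$. If it is non-increasing there, then
\begin{equation*}
\rho(z,c)=g(z)-\max\{h(c),\,cS_{n+1}\}=\min\Bigl\{\Ebb_0\Bigl[\rho\Bigl(z\tfrac{p_1(X)}{p_0(X)},\,c-1\Bigr)\Bigr],\;g(z)-cS_{n+1}\Bigr\}.
\end{equation*}
This is a minimum of two affine functions of $c$, hence concave on $[n,n+1]$. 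Since $\rho(z,\cdot)$ is also convex, it is affine there.

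An equivalent argument: concavity of $h$ makes $b\mapsto h(b)/b$ quasi-concave, because each set $\{b: h(b)\ge tb\}$ is an interval. So if it is decreasing on $[n,n+1]$, it is non-increasing on all of $[n,\infty)$, and hence $S_{n+1}\le h(c)/c$. This version also gives the switch at an integer $m^*(z)$ that Theorem~\ref{th:optimal_stopping} needs.

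\textbf{Comparison with the paper.} The inherited properties and convexity are argued the same way. The paper mixes $\psi_1^*$ and $\psi_2^*$ by an initial coin flip; your $\phi$-coordinates are a cleaner way to see that this mixture is an admissible policy with linear cost and constraint.

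For piecewise linearity, however, the paper inducts on $k$ over iterates $\tilde\rho_k$ of the Bellman operator started at $\tilde\rho_0=\rho_0$. It asserts that every $\tilde\rho_k(z,\cdot)$ is affine on each $[n,n+1]$ and then passes to the limit, never using convexity. That argument does not address the regime switch you flagged, and the iterates are not convex. For $c\in[n,n+1)$, $n\ge1$, one finds
\begin{equation*}
\tilde\rho_1(z,c)=\min\{\rho_0(z,n),\,g(z)-cS^0_{n+1}\},\qquad S^0_{n+1}=\sup_{m\ge n+1}\bigl(g(z)-\rho_0(z,m)\bigr)/m,
\end{equation*}
which in general kinks at the non-integer point $c=(g(z)-\rho_0(z,n))/S^0_{n+1}$.

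Working with the fixed point $\rho$ itself and feeding in its convexity is exactly what excludes such kinks, so your route is the more robust one. Its price is that it relies on Lemma~\ref{lm:integral_equation} and on convexity being established first, and it says nothing about the finite-$k$ iterates.
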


Lemma~\ref{lm:rho_properties} is proven in Appendix~\ref{apx:rho_properties}. It is key to simplifying the optimal stopping rule and allows us to eliminate most of the complications that occurred in the limited randomness case. First, since a convex and piecewise linear function is completely specified by its breakpoints, we can restrict the domain of the second argument of $\rho$ to $\Nbb_{\geq 0}$ without loss of generality.

\begin{corollary}
	The integral equation 
	\begin{equation}
		\frac{g(z) - \rho(z, n)}{n} = \sup_{m \geq n} \frac{g(z) - \Ebb_0\bigl[\rho\bigl(z\frac{p_1(X)}{p_0(X)}, m - 1\bigr)\bigr]}{m}
		\label{eq:rho_bellman_discrete}
	\end{equation}
	with boundary condition $\rho(z, 0) = g(z)$ has a unique solution on $\Rbb_{\geq 0} \times \Nbb_{\geq 0}$.
\end{corollary}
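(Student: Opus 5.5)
We reduce the corollary to Lemma~\ref{lm:integral_equation} and Lemma~\ref{lm:rho_properties}. There are two claims: existence of a solution of \eqref{eq:rho_bellman_discrete} on $\Rbb_{\geq 0}\times\Nbb_{\geq 0}$, and its uniqueness. For existence, take the restriction to integer $c$ of the function $\rho$ from Lemma~\ref{lm:rho_limit}. For uniqueness, show that any solution on the integers extends piecewise linearly in $c$ to a solution of \eqref{eq:rho_bellman}, and then invoke the uniqueness part of Lemma~\ref{lm:integral_equation}.

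\emph{Existence.} Fix $z$ and let $n\geq 1$ be an integer. Then $\max\{1,n\}=n$, so \eqref{eq:rho_bellman} at $c=n$ has the same form as \eqref{eq:rho_bellman_discrete}, except that the maximum runs over real $b\geq n$ instead of integers $m\geq n$. By Lemma~\ref{lm:rho_properties}, $\rho(z\ell,\bullet)$ is convex and piecewise linear with integer breakpoints for every $\ell$. Hence $b\mapsto \Ebb_0[\rho(z\ell(X),b-1)]$ is convex and piecewise linear on $[1,\infty)$ with integer breakpoints, because averaging preserves both properties. The objective $b\mapsto (g(z)-h(b))/b$ with $h$ affine on $[m,m+1]$ has the form $(A-Bb)/b = A/b - B$, which is monotone on that interval. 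Its supremum over $[m,m+1]$ is therefore attained at an endpoint, so the supremum over real $b\geq n$ equals the supremum over integers $m\geq n$. The case $n=0$ is exactly the boundary condition.

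\emph{Uniqueness.} Let $\tilde\rho$ solve \eqref{eq:rho_bellman_discrete} with $\tilde\rho(z,0)=g(z)$. Extend it to real $c$ by linear interpolation between consecutive integers. We then need to verify that the extension satisfies \eqref{eq:rho_bellman}.

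\emph{Integer $c$.} The endpoint argument above carries over, since the extension is piecewise linear by construction.

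\emph{Non-integer $c\in(n,n+1)$.} We must show
\[
g(z)-\tilde\rho(z,c)=c\,\max_{b\geq \max\{1,c\}}\frac{g(z)-\Ebb_0[\tilde\rho(z\ell,b-1)]}{b}.
\]
Write $R(b)$ for the fraction inside the maximum and set $M_n=\sup_{m\geq n}R(m)$, so that \eqref{eq:rho_bellman_discrete} gives $g(z)-\tilde\rho(z,n)=nM_n$. The left side is the linear interpolation of $nM_n$ and $(n+1)M_{n+1}$. On $[n,n+1]$, $R$ is monotone, so the supremum over $b\geq c$ is either $M_{n+1}$ or $R(c)$. This is the step I expect to be the main obstacle: showing that $c\mapsto c\sup_{b\geq c}R(b)$ is affine on $[n,n+1]$ and matches the interpolation.

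My first attempt would compare this with Lemma~\ref{lm:rho_k_recursive}, which shows that for $c\geq 1$ the optimal choice is either to continue deterministically or to randomize to some $b$. Alternatively, I would avoid the direct identity altogether. Suppose $\tilde\rho$ and the restricted $\rho$ both solve \eqref{eq:rho_bellman_discrete}. One can then run a direct uniqueness argument on the integer equation: iterate the contraction-like map $n\mapsto$ (right-hand side) from the common boundary value at $n=0$. Since $g-\rho\in[0,1]$ is bounded, truncating $m$ at a large horizon $N$ introduces an error of $O(1/N)$ in the ratio. An induction over truncation horizons, mirroring the proof of Lemma~\ref{lm:integral_equation} in Appendix~\ref{apx:integral_equation}, then gives $\tilde\rho=\rho$ on the integers. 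This fallback is the route I would actually pursue if the interpolation identity resists.
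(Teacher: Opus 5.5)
Your existence half is correct, and it is exactly the content behind the paper's proof, which is just one line citing Lemma~\ref{lm:integral_equation} and Lemma~\ref{lm:rho_properties}. Integer breakpoints make $b\mapsto R(b)$ monotone on each $[m,m+1]$, so the real and integer suprema coincide. Uniqueness, however, is not established. The step you flag cannot be proved from \eqref{eq:rho_bellman_discrete} alone.

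\textbf{The interpolation step fails in general.} Suppose that at some $(z,n)$ with $n\geq 1$ one has $R(n)>M_{n+1}>R(n+1)$, so that $M_n=R(n)$. For $c=n+\theta$ with $\theta\in(0,1)$, the linear interpolation gives
\[
g(z)-\tilde\rho(z,c)-c\,R(c)=\theta(n+1)\bigl(M_{n+1}-R(n+1)\bigr)>0,
\qquad
g(z)-\tilde\rho(z,c)-c\,M_{n+1}=(1-\theta)\,n\,\bigl(R(n)-M_{n+1}\bigr)>0 .
\]
Hence $g(z)-\tilde\rho(z,c)>c\max\{R(c),M_{n+1}\}=c\sup_{b\geq c}R(b)$, and \eqref{eq:rho_bellman} fails. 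So interpolation produces a real-$c$ solution exactly when this configuration never occurs. That amounts to unimodality of $m\mapsto R(m)$. For $\rho$ itself this follows from convexity (Lemma~\ref{lm:rho_properties}), but it is not available for an unknown solution, so the reduction is circular. Also, Appendix~\ref{apx:integral_equation} only shows that $\rho$ \emph{satisfies} \eqref{eq:rho_bellman}. It contains no uniqueness argument to invoke or to mirror.

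\textbf{The fallback does not repair this.} The right-hand-side operator is not a contraction. The term $m=n$ enters with weight $n/m=1$, so the operator is only nonexpansive in the sup norm; the problem is undiscounted. The value at index $n$ depends on the values at all indices $\geq n-1$, including $n$ itself. So there is no induction upward from $n=0$, and truncating at $m\leq N$ still leaves a coupled infinite-horizon system.

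\textbf{What is missing is a verification argument for any bounded solution $\tilde\rho$.}
\begin{enumerate}
\item At integer budget $c_t\geq 1$, choose a maximizing $m_t$ in \eqref{eq:rho_bellman_discrete}, or stop if the supremum is $0$ and not attained. Continue with probability $c_t/m_t$ and new budget $m_t-1$. For each fixed $\bm{x}$ this gives
\[
\Ebb\bigl[T\bm{1}_{\{T<N\}}+(N+c_N)\bm{1}_{\{T\geq N\}}\bigr]=c .
\]
Hence the policy is feasible and $N\,\Pbb[T\geq N]\leq c$. Because $\tilde\rho$ is bounded, the tail term $\Ebb_0[\tilde\rho(z_N,c_N)\bm{1}_{\{T\geq N\}}]$ vanishes. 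Unrolling the equation then shows that $\tilde\rho(z,c)$ equals the cost of a feasible policy, so $\tilde\rho\geq\rho$.
\item The equation also yields
\[
\tilde\rho(z,n)\leq\bigl(1-\tfrac{n}{m}\bigr)g(z)+\tfrac{n}{m}\,\Ebb_0\bigl[\tilde\rho\bigl(z\tfrac{p_1(X)}{p_0(X)},m-1\bigr)\bigr]\quad\text{for every integer } m\geq n,
\]
and also $\tilde\rho\leq g$. Unroll this along the policy from step 1 constructed from $\rho$ itself. Its cost is $\rho$, by step 1 combined with your existence step. This gives $\tilde\rho\leq\rho$.
\end{enumerate}
Boundedness is what kills the tail term, so it belongs in the uniqueness class.
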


\begin{proof}
	The corollary is a direct consequence of Lemma~\ref{lm:integral_equation} and Lemma~ \ref{lm:rho_properties}.
\end{proof}

We can now state the optimal stopping policy of the nonparametric Kiefer--Weiss test. This constitutes the main result of the paper.
\begin{theorem}
	Let $(z_t, c_t) \in \Rbb_{\geq 0}^2$ be a sequential test statistic with initial value $(z_0, c_0) = (z, c)$ and update rule
	\begin{align*}
		z_{t+1} &= z_t \, \frac{p_1(x_{t+1})}{p_0(x_{t+1})} \\ 
		c_{t+1} &= \max\{ c_t \,,\, m^*(z_t) \} - 1,
	\end{align*}
	where 
	\begin{equation}
		m^*(z) = \argmax_{m \geq 1} \frac{g(z) - \Ebb_0\bigl[\rho\bigl(z\frac{p_1(X)}{p_0(X)}, m - 1\bigr)\bigr]}{m},
		\label{eq:m_opt}
	\end{equation}
	and $\rho$ solves \eqref{eq:rho_bellman_discrete}. The stopping rule
	\begin{equation}
		\psi_t^* = 1 - \frac{c_t}{\max\{ c_t \,,\, m^*(z_t) \}}
		\label{eq:stopping_rule}
	\end{equation}
	is optimal in the sense of \eqref{eq:nonpa_kiwei_stopping}.
	\label{th:optimal_stopping}
\end{theorem}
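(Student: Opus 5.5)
The plan is a verification argument: show that the cost of the proposed policy $\psi^*$, started from $(z_0,c_0)=(z,c)$, equals $\rho(z,c)$ and that $\psi^*$ is feasible. Optimality then follows from Lemma~\ref{lm:rho_limit}, because $\rho(z,c)$ is the minimum in \eqref{eq:nonpa_kiwei_stopping}. By Lemma~\ref{lm:rho_properties} it suffices to understand $\rho(z,\bullet)$ at integer points; by \eqref{eq:rho_bellman_discrete} and \eqref{eq:m_opt} it suffices to understand the maximizer $m^*(z)$.

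\textbf{Step 1: one-step structure of $\rho$.} Define $h(z,m) \coloneq \Ebb_0[\rho(z\frac{p_1(X)}{p_0(X)}, m-1)]$. I would first show that for $c \ge 0$
\begin{equation*}
	\rho(z,c) =
	\begin{cases}
		g(z) - \frac{c}{m^*(z)}\bigl(g(z) - h(z,m^*(z))\bigr), & c < m^*(z), \\
		h(z,c), & c \ge m^*(z).
	\end{cases}
\end{equation*}
Evaluating \eqref{eq:rho_bellman} at $b = \max\{c, m^*(z)\}$ gives the first case directly when $c<m^*(z)$, since $m^*(z)$ is then feasible.

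For $c \ge m^*(z)$ we need the maximum over $b \ge c$ to be attained at $b=c$. Equivalently, $b \mapsto (g(z)-h(z,b))/b$ should be unimodal, or at least non-increasing beyond its global maximizer. I would derive this from Lemma~\ref{lm:rho_properties}. Since $\rho(\cdot,\bullet)$ is convex and non-increasing in its second argument, $h(z,\bullet)$ is convex, so $b \mapsto g(z)-h(z,b)$ is concave. It is also nonnegative at $b=1$, because $h(z,1)\le g(z)$ by Jensen.

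For a concave $f$ with $f(0)\ge 0$, the ratio $f(b)/b$ is non-increasing. Here the value at $b=0$ is the extrapolation $g(z)-\Ebb_0[\rho(\cdot,-1)]$, which is not defined, so the fix is to restrict to $b \ge 1$. There I would argue that the ratio is quasi-concave: it increases up to $m^*$ and decreases afterwards. This suffices, because for $c \ge m^*$ the maximizer over $b\ge c$ is then $b=c$, so $(g-\rho)/c = (g-h(z,c))/c$ and hence $\rho(z,c)=h(z,c)$. Non-integer $c$ and the $\argmax$ over reals are handled by the piecewise linearity in Lemma~\ref{lm:rho_properties}; this is also why $m^*$ may be taken integer.

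\textbf{Step 2: interpretation as a policy.} At state $(z_t,c_t)$, set $b_t=\max\{c_t,m^*(z_t)\}$. The test stops with probability $1-c_t/b_t$ and otherwise continues with budget $b_t-1$. The expected remaining sample size is then $(c_t/b_t)\,b_t = c_t$ on every path, which verifies the pointwise constraint of Theorem~\ref{th:equivalence} inductively. This needs the remaining budget to be exhausted correctly, and in particular $c_t=0$ to force stopping, which holds because $\psi_t^*=1$ when $c_t=0$.

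The expected cost satisfies
\begin{equation*}
	\text{cost}(z_t,c_t) = \bigl(1-\tfrac{c_t}{b_t}\bigr)g(z_t) + \tfrac{c_t}{b_t}\,\Ebb_0\bigl[\text{cost}(z_{t+1},b_t-1)\bigr].
\end{equation*}
By Step 1, $\rho$ satisfies the same relation: in the case $c<m^*$, $\rho(z,m^*)=h(z,m^*)$ follows from the second case applied at $c=m^*$. Hence $\text{cost}-\rho$ satisfies a homogeneous recursion.

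\textbf{Step 3: closing the recursion.} This is the main obstacle: the recursion runs over unbounded horizons, so one must show that the policy's cost actually equals $\rho$ rather than just another fixed point. Since the $c_t$ are bounded by $\max\{c,\sup_z m^*(z)\}$, I would first try to show $\sup_z m^*(z)<\infty$, or at least that the policy stops almost surely with finite expectation; the latter already holds since $\Ebb[T]\le c$.

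I would then use the tail bound. Writing $A_n$ for the event of not having stopped by time $n$, the cost is bounded by
\begin{equation*}
	\text{cost} \le \Ebb_0\bigl[\,\cdots ; T\le n\bigr] + \Ebb_0\bigl[\,g(z_n) ; A_n\bigr].
\end{equation*}
The difference between cost and $\rho$ after $n$ unrollings is bounded by $2\,\Pbb_0(T>n)$, and $\Pbb_0(T>n) \le c/n \to 0$ by Markov. As a backup, I would compare with the limited-randomness policies of Theorem~\ref{th:optimal_stopping_k} and invoke the uniqueness statement of Lemma~\ref{lm:integral_equation}.
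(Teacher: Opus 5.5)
Your proposal is correct. Your Step~1 is in substance the paper's entire proof. The paper shows only that $\max\{m^*(z),c\}$ maximizes $b\mapsto\bigl(g(z)-\Ebb_0[\rho(z\frac{p_1(X)}{p_0(X)},b-1)]\bigr)/b$ over $b\ge\max\{1,c\}$. It then declares this sufficient, taking optimality of the induced policy for granted via the dynamic-programming principle, as in Theorem~\ref{th:optimal_stopping_k}.

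Your justification of the case $c\ge m^*(z)$ is cleaner than the paper's one-line appeal to convexity. Concavity of $f(b)=g(z)-h(z,b)$ makes every superlevel set $\{b\ge 1: f(b)\ge tb\}$ an interval. Hence $f(b)/b$ is quasi-concave and therefore non-increasing to the right of any global maximizer, with no condition at $b=0$ needed.

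Steps~2--3 are where you genuinely add something. For an untruncated policy, following the Bellman maximizer at every step does not by itself rule out budget or cost escaping to $t=\infty$, and the paper does not address this. Your verification argument handles it, once ``inductively'' is read as induction on the truncation level. Let $T'$ be the stopping time of the continued test from $(z_1,b-1)$ and $b=\max\{c,m^*(z)\}$. Then $\Ebb[T\wedge(n+1)]=\frac{c}{b}\bigl(1+\Ebb[T'\wedge n]\bigr)\le \frac{c}{b}\,b=c$, uniformly in the starting state and in $\bm{x}$. Monotone convergence then gives $\Ebb_{\psi^*}[\tau(\bm{x})]\le c$. The policy's cost and $\rho$ are both $[0,1]$-valued and obey the same one-step relation, so after $n$ unrollings they differ by at most $\Pbb_0(T\ge n)\le c/n\to 0$. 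This tail bound uses the feasibility just established, so the two steps must come in that order.

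Two small remarks. Drop the attempt to show $\sup_z m^*(z)<\infty$: it is false in general, since $m^*$ grows roughly linearly in $|\log z|$ (see Section~\ref{sec:examples}). The backup via uniqueness is also unnecessary. The paper's proof buys brevity by leaning on the dynamic-programming principle; yours makes the theorem self-contained.
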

Theorem~\ref{th:optimal_stopping} is proven in Appendix~\ref{apx:optimal_stopping}.

\section{Discussion}	 
\label{sec:discussion} 

By inspection of Theorem~\ref{th:optimal_stopping}, the NPKWT uses a two-dimensional test statistic: $z_t$ tracks the likelihood ratio, and $c_t$ tracks the expected number of remaining samples. The evolution of $c_t$, and in turn the stopping policy of the test, is completely specified by the function $m^*$, which maps a (real-valued) likelihood ratio to an (integer-valued) sample size. At time $t$, the test stops with certainty if and only if $c_t = 0$, and it continues with certainty if and only if $c_t \geq m^*(z_t)$. In all other cases, the stopping decision is randomized and the test continues with probability $\frac{c_t}{m^*(z_t)}$.

The NPKWT can be interpreted as a conventional SPRT augmented with a ``sample budget,'' $c_t$. As long as this budget is sufficiently large, the test continues with certainty and decrements $c_t$ with every observation. In this regime, the NPKWT effectively behaves like an FSST. However, its behavior changes when $c_t$ is small enough that the test outcome is effectively determined, meaning the probability of the remaining samples changing the decision is close to zero. In this case, the NPKWT either stops immediately and saves the remaining samples for another run, or continues with a sample budget that is sufficiently large to potentially affect its outcome. The function $m^*$ makes this notion precise: Given the likelihood ratio $z_t$, continuing the test is only worthwhile if the available sample budget is at least $m^*(z_t)$.

The stopping policy of the NPKWT will be illustrated with concrete, numerical examples in Section~\ref{sec:examples}. Before concluding this section, some additional remarks are useful:

\begin{itemize}
	\item The optimal stopping policy, $\psi^*$, satisfies $\Ebb_{\psi^*}[\tau(\bm{x})] = c$ for all $\bm{x} \in \Xcal^\infty$. That is, the NPKWT has a constant expected sample size for all possible sequences $\bm{x} \in \Xcal^\infty$. In turn, \emph{every} distribution is least favorable. This ``equalization of cost'' is a well-known characteristic of minimiax procedures; see, for example, \cite[Chapter~5, Corollaries 1.5 and 1.6]{Lehmann1998}. Interestingly, by construction, the equalization property not only holds for $m^*$, but for all functions $\Rbb_{\geq 0} \to \Nbb_{\geq 1}$. Suboptimal choices will increase the error probabilities, but will not violate the sample size constraint. In this sense, the NPKWT is robust against deviations from the optimal policy. 
	
	\item In contrast to most sequential tests, including the parametric KWT, the stopping rule of the NPKWT is not threshold-based. Loosely speaking, the NPKWT stops because of ``insufficient remaining samples,'' not because of ``sufficient evidence.''
	
	\item The NPKWT is \emph{untruncated} in the sense that its stopping time is generally supported on $\Nbb_{\geq 0}$. Arguably, this is an unexpected property for at least two reasons: First, given the underlying problem formulation, one might expect the NPKWT to be truncated in order to avoid sequences with unbounded stopping time. This intuition is true for deterministic stopping policies, for which the existence of such a sequence indeed implies an unbounded expected stopping time. For randomized policies, however, each sequence is associated with a stopping time \emph{distribution} whose expectation can be bounded, despite its support being unbounded. Second, it is well-known that the parametric KWT is typically truncated \cite[Section~5.3]{Tartakovsky2014}. Since the NPKWT is a restriction of the KWT, it seems natural that it should inherit this property. However, the significantly stricter constraint on the expected sample size changes the character of the test. As discussed above, the NPKWT is no longer a threshold-based sequential test, so that intuitions learned from the latter do not necessarily carry over. 
	
	\item When the NPKWT continues after a randomized stopping decision, its expected remaining sample size becomes integer-valued since $m^*$ maps to $\Nbb_{\geq 1}$. In other words, $c_t$ eventually snaps to the integer grid. Moreover, since $\rho(z, \bullet)$ is affine between integers, an optimal test for a non-integer-valued $c$ can be implemented by mixing the optimal policies for $\lfloor c \rfloor$ and $\lceil c \rceil$. In this sense, policies for non-integer-valued $c$ are redundant. This observation also supports the interpretation of the NPKWT as a relaxed FSST.
	
	\item In practice, it is typically easier to work with logarithmic instead of linear likelihood ratios. In this case, instead of solving \eqref{eq:rho_bellman_discrete}, one can directly solve
	\begin{equation}
		\frac{g(a^\lambda) - \varrho(\lambda, n)}{n} = \sup_{m \geq n} \frac{g(a^\lambda) - \Ebb_0\bigl[\varrho\bigl(\lambda + \ell_a(X), m - 1\bigr)\bigr]}{m}
		\label{eq:rho_bellman_discrete_llr}
	\end{equation}
	with boundary condition $\varrho(\lambda, 0) = g(a^\lambda)$ on $\Rbb \times \Nbb_{\geq 0}$, where $a$ denotes the base of the logarithm and
	\begin{equation}
		\ell_a(X) \coloneq \log_a \frac{p_1(X)}{p_0(X)}.
		\label{eq:llr}
	\end{equation}
	To obtain the counterpart of $m^*$ in the log-domain, $\varrho$ can be substituted for $\rho$ in \eqref{eq:m_opt}.
	
	\item In \eqref{eq:nonpa_kiwei}, we formulate the nonparametric Kiefer--Weiss problem as an error probability minimization under a constraint on the maximum expected sample size. An alternative formulation, which is arguably more in the spirit of sequential analysis, is to minimize the maximum expected sample size under constraints on the error probabilities:
	\begin{equation}
		\inf_{\psi, \delta \in \Ical^\infty} \; \sup_{\Pb \in \Mcal(\Xcal^\infty)} \; \Ebb_{\Pb, \psi}\bigl[ T \bigr] \quad \text{s.t.} \quad \alpha(\delta, \psi) \leq \overline{\alpha}, \quad \beta(\delta, \psi) \leq \overline{\beta}.
		\label{eq:nonpa_kiwei_alternative}
	\end{equation}
	Using Lagrange duality, it is not hard to show that \eqref{eq:nonpa_kiwei_alternative} can be solved by finding a pair $(z, c)$ such that the corresponding optimal policy, $(\delta^*, \psi^*)$, saturates the error probability constraints. In other words, $(\delta^*, \psi^*)$ solves \eqref{eq:nonpa_kiwei_alternative} if $\alpha(\delta^*, \psi^*) = \overline{\alpha}$ and $\beta(\delta^*, \psi^*) = \overline{\beta}$. However, while any decision rule of the form \eqref{eq:delta_opt} solves \eqref{eq:nonpa_kiwei}, the solution of \eqref{eq:nonpa_kiwei_alternative} can require a particular randomization.
	
	\item The assumptions that $P_1$ is absolutely continuous with respect to $P_0$ and that $D_\text{KL}(P_0 \lVert P_1)$ is finite can be relaxed. In fact, the latter assumption is not needed for the derivation of the main result but is required for the approximations proposed in the next section. The first assumption can be relaxed by defining $\frac{p_1}{p_0} \coloneq \infty$ on $\{ x \in \Xcal \colon p_0(x) = 0, p_1(x) > 0\}$ and $\rho(\infty, c) \coloneq 1$.
\end{itemize}

\section{Approximate Stopping Policies} 
\label{sec:approximations}							

In this section, we propose two approximations of the function $m^*$ in \eqref{eq:m_opt}. Both are obtained by considering the single-randomization case, $k = 1$. In this case, the optimal remaining sample size after a randomized stopping decision is characterized by $\rho_0$, which is the cost of a fixed-sample size test and can be closely approximated using standard techniques.

It follows from Theorem~\ref{th:reduction} and Lemma~\ref{lm:rho_0} that for a given pair $z, c \geq 0$ the optimal test with deterministic stopping policy is a likelihood ratio test of size $\lfloor c \rfloor$ with decision threshold $z^{-1}$. Using a normal approximation for the log-likelihood ratio distribution one obtains
\begin{align}
	\rho_0(z, c) &= \Pb_0\Biggl[ \prod_{t=1}^{\lfloor c \rfloor} \frac{p_1(X_t)}{p_0(X_t)} > \frac{1}{z} \Biggr] + z \, \Pb_1\Biggl[ \prod_{t=1}^{\lfloor c \rfloor} \frac{p_1(X_t)}{p_0(X_t)} \leq \frac{1}{z} \Biggr] \label{eq:rho0_explicit} \\
	&= \Pb_0\Biggl[ \sum_{t=1}^{\lfloor c \rfloor} \log \frac{p_1(X_t)}{p_0(X_t)} > -\log z \Biggr] + z \, \Pb_1\Biggl[ \sum_{t=1}^{\lfloor c \rfloor} \log \frac{p_1(X_t)}{p_0(X_t)} \leq - \log z \Biggr] \notag \\
	&\approx 1 - \Phi\biggl( -\frac{\log z + \mu_0 \lfloor c \rfloor}{\sigma_0 \sqrt{\lfloor c \rfloor}} \biggr) + z \, \Phi\biggl( -\frac{\log z + \mu_1 \lfloor c \rfloor}{\sigma_1 \sqrt{\lfloor c \rfloor}} \biggr) \notag \\
	&\approx \Phi\biggl(\frac{\log z + \mu_0 c}{\sigma_0 \sqrt{c}} \biggr) + z \, \Phi\biggl( -\frac{\log z + \mu_1 c}{\sigma_1 \sqrt{c}} \biggr), \label{eq:rho0_approx}
\end{align}
where $\Phi$ denotes the cumulative distribution function (CDF) of the standard normal distribution, and $\mu_i, \sigma_i^2$, $i \in \{0,1\}$, denote the mean and variance of the log-likelihood ratio in \eqref{eq:llr} under $\Hcal_i$, respectively. For brevity, we omit the dependence on the base $a$ in this section. Note that in \eqref{eq:rho0_explicit} the underlying test decides for $\Hcal_0$ if the test statistic hits the threshold exactly. This assignment is arbitrary and without loss of optimality---compare Theorem~\ref{th:reduction}. Finally, in the last step, the flooring operation is dropped to simplify the expression.

Substituting the right-hand side of \eqref{eq:rho0_approx} for $\rho_0$ in \eqref{eq:r_def} yields an approximation of $r_0$ and in turn of $b_1^*$. This is our fist approximation of $m^*$.
\begin{approximation}[Approximate Upper Bound]
	\begin{equation}
		m^*(z) \lesssim \argmax_{b \geq 1} \, \frac{1}{b}\left( g(z) - \Phi\biggl( \frac{\log z + \mu_0 b}{\sigma_0 \sqrt{b}} \biggr) - z \, \Phi\biggl( -\frac{\log z + \mu_1 b}{\sigma_1 \sqrt{b}} \biggr) \right).
		\label{eq:approx_ub}
	\end{equation}
\end{approximation}
The claim that the right-hand side of \eqref{eq:approx_ub} is an approximate upper bound on $m^*$ is based on the following rationale: for $k = 1$, the NPKWT reduces to a fixed-sample size test after the first randomized stopping decision. That is, given that the test continues, it will take exactly $\lfloor b_1^* \rfloor$ additional samples. Consequently, $b_1^*$ needs to be sufficiently large for the additional samples to be useful with high probability. For $k \gg 1$, in contrast, the sample size remains variable after a randomized stopping decision and can be adjusted again after the next one and so on. This leads to a more conservative increase in the remaining sample size at any particular time. Therefore, $b_1^*$ is typically larger than $m^*$. 
However, in the next section, it will be shown via a numerical counterexample that $b_1^*$, and in turn its approximation in \eqref{eq:approx_lb}, is not an upper bound on $m^*$ in general. 

For the second approximation, assume that $z \to \infty$. In this case, the second term in \eqref{eq:rho0_approx} goes to zero and the right-hand side of \eqref{eq:approx_ub} reduces to
\begin{equation*}
	\frac{1}{b}\left( g(z) - \Phi\biggl( \frac{\log z + \mu_0 b}{\sigma_0 \sqrt{b}} \biggr)\right) = \frac{1}{b} \Phi\biggl( -\frac{\log z + \mu_0 b}{\sigma_0 \sqrt{b}} \biggr),
\end{equation*}
where we used the fact that $g(z) = 1$ for $z \geq 1$. In Appendix~\ref{apx:proof_lb_approx}, it is shown that 
\begin{equation}
	\argmax_{b \geq 1} \, \frac{1}{b} \Phi\biggl( -\frac{\log z + \mu_0 b}{\sigma_0 \sqrt{b}} \biggr) \geq -\frac{\log z}{\mu_0} - \frac{\sigma_0^2}{\mu_0^2}.
	\label{EQ:BOPT_LB0}
\end{equation}
Note that the first term on the right-hand side is positive for $\log z > 0$ since $\mu_0 < 0$. The same line of arguments can be applied in the case $z \to 0$ to obtain the bound
\begin{equation}
	\argmax_{b \geq 1} \, \frac{1}{b} \Phi\biggl( \frac{\log z + \mu_1 b}{\sigma_1 \sqrt{b}} \biggr) \geq -\frac{\log z}{\mu_1} - \frac{\sigma_1^2}{\mu_1^2},
	\label{eq:bopt_lb1}
\end{equation}
where the first term is positive for $\log z < 0$ since $\mu_1 > 0$. These bounds, together with the trivial bound $m^* \geq 1$, yield our second approximation.
\begin{approximation}[Approximate Lower Bound]
	\begin{equation}
		m^*(z) \gtrsim \max\biggl\{ - \frac{\log z}{\mu_1} - \frac{\sigma_1^2}{\mu_1^2} \,,\, 1 \,,\, -\frac{\log z}{\mu_0} - \frac{\sigma_0^2}{\mu_0^2}\biggr\}.
		\label{eq:approx_lb}
	\end{equation}
	\label{apr:lower_bound}
\end{approximation}
The claim that the right-hand side of \eqref{eq:approx_lb} is an approximate lower bound is based on two observations: first, by construction, the approximation lower bounds $b_1^*$ for $z \to \infty$ and $z \to 0$, respectively. Second, it follows from Wald's identity \cite[Eq.~(3:55)]{Wald1947} that it also provides a lower bound on the expected number of samples required for an SPRT to reverse a wrong preference, that is, to either lower the log-likelihood ratio from $\log z > 0$ to $0$ under $\Hcal_0$, or to increase it from $\log z < 0$ to $0$ under $\Hcal_1$. More specifically, the $\log z$-terms in \eqref{EQ:BOPT_LB0} and \eqref{eq:bopt_lb1} correspond to the expected sample sizes when the log-likelihood ratio is assumed to hit $0$ exactly, and the constant terms are second-order corrections. Interestingly, in sequential detection the correction terms are typically positive and correct for the expected \emph{overshoot}. Here, the correction terms are negative, suggesting that $m^*(z)$ can be smaller than Wald's uncorrected approximation. The numerical examples in the next section show that this is indeed the case.

Finally, we would like to remark that the presented approximations can be improved in various ways. For example, depending on the available compute, one can tighten the approximate upper bound by using \eqref{eq:rho0_approx} as a basis to recursively obtain approximations of $\rho_k$, $k \geq 1$, and in turn $b_k^*$. Alternatively, one can obtain a good approximation of $m^*$ by dropping or modifying the correction terms in \eqref{eq:approx_lb}; see, for example, \cite{Siegmund1985} for various higher-order corrections. In general, we believe that interpreting $m^*(z)$ as the expected number of samples required for an SPRT to return from an excursion to the wrong side of the decision threshold is a good starting point for the derivation of approximate stopping policies for the NPKWT.

\section{Numerical Examples} 
\label{sec:examples}				 

In this section, we present and discuss two concrete numerical examples of the NPKWT. In the first example, we test for a shift in the success probability of a Bernoulli distribution:
\begin{equation}
	\begin{aligned}
		\Hcal_0 &\colon \bm{X} \text{ i.i.d.\ according to } \text{Bernoulli}(\sfrac{1}{3}), \\
		\Hcal_1 &\colon \bm{X} \text{ i.i.d.\ according to } \text{Bernoulli}(\sfrac{2}{3}). \\
	\end{aligned}
	\label{eq:bernoulli_example}
\end{equation}
In the second example, we test for a shift in the mean of a normal distribution:
\begin{equation}
	\begin{aligned}
		\Hcal_0 &\colon \bm{X} \text{ i.i.d.\ according to } \mathcal{N}(0, 1), \\
		\Hcal_1 &\colon \bm{X} \text{ i.i.d.\ according to } \mathcal{N}(1, 1). \\
	\end{aligned}
	\label{eq:normal_example}
\end{equation}
Both examples are simple, but well-suited to illustrate the NPKWT. 

For the unlimited randomness case, $k \to \infty$, the results presented in this section were obtained by numerically solving \eqref{eq:rho_bellman_discrete_llr} and in turn \eqref{eq:m_opt}. For the two examples above, \eqref{eq:rho_bellman_discrete_llr} simplifies to
\begin{equation}
	\frac{g(2^\lambda) - \varrho(\lambda, n)}{n} = \sup_{m \geq n} \frac{g(2^\lambda) - \frac{2}{3} \varrho\bigl(\lambda - 1 , m - 1\bigr) - \frac{1}{3} \varrho\bigl(\lambda + 1, m - 1\bigr)}{m}
	\label{eq:rho_bernoulli}
\end{equation}
and
\begin{equation}
	\frac{g(e^\lambda) - \varrho(\lambda, n)}{n} = \sup_{m \geq n} \frac{g(e^\lambda) - \Ebb_{\mathcal{N}(0, 1)}\bigl[\varrho\bigl(\lambda + X - \frac{1}{2}, m - 1\bigr)\bigr]}{m},
	\label{eq:rho_gauss}
\end{equation}
respectively. Note that we used the binary logarithm in \eqref{eq:rho_bernoulli} and the natural logarithm in \eqref{eq:rho_gauss}. 

All results presented in this section were obtained by iteratively calculating $\rho_{k+1}$ from $\rho_k$. The basis for the iteration was $\rho_0$, which can be calculated explicitly for both examples. Both $\log z$ and $c$ were discretized on a regular grid. For a given value of $c$, $\varrho_k(\bullet, c)$ was evaluated on the given log-likelihood grid, and then approximated using cubic splines. The maximization over $b_k$ was performed by an exhaustive search over the $c$-grid. For the unlimited-randomness case, the iteration was considered converged when $\lVert \rho_k - \rho_{k-1} \rVert_\infty < 10^{-6}$ on the given grid, where $\lVert \bullet \rVert_\infty$ denotes the maximum norm. Given its stronger properties, the iteration steps can be slightly simplified if one is only interested in the limit $\rho$. We will not go into more details of the numerical solution here. Python code to reproduce the results and figures in this section has been made publicly available \cite{github}. 

\subsection{Optimal Cost} %

\begin{figure}
	\centering
	\includegraphics{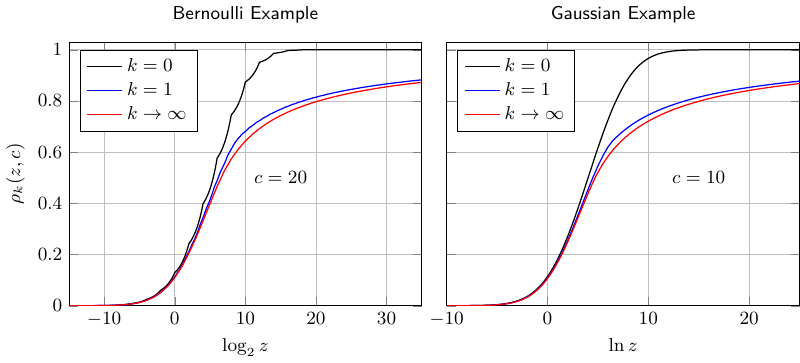}
	\caption{Optimal cost $\rho_k(z, c)$ as a function of $z$ for fixed $c$}
	\label{fig:rho_z}
\end{figure}

First, we focus on the optimal cost function, $\rho$, and its limited randomness counterpart, $\rho_k$. In Fig.~\ref{fig:rho_z}, the costs of the NPKWTs for the hypotheses in \eqref{eq:bernoulli_example} and \eqref{eq:normal_example} are plotted as functions of $z$ for a fixed value of $c$. Here, we used $c = 20$ for the Bernoulli example and $c = 10$ for the Gaussian example. By inspection, the biggest reduction in cost happens when going from $k = 0$ to $k = 1$. In light of the discussion in Section~\ref{sec:discussion}, this indicates that a single randomization is sufficient to overcome the bottleneck of the test statistic ``getting stuck'' on the wrong side of the decision threshold. Allowing for an unlimited number of randomization uses, $k \to \infty$, further reduces the cost, but the effects are less significant.

\begin{figure}
	\centering
	\includegraphics{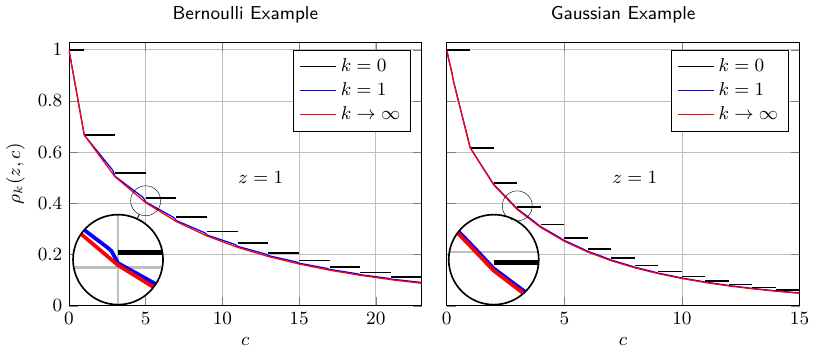}
	\caption{Optimal cost $\rho_k(z, c)$ as a function of $c$ for fixed $z$}
	\label{fig:rho_c}
\end{figure}

In Fig.~\ref{fig:rho_c}, the optimal costs of the same NPKWTs are plotted as functions of $c$ for a fixed value of $z$. In both examples, we set $z = 1$, so that the cost is the sum error probability. For $k = 0$, the optimal test is an FSST, and $\rho_0(z, \bullet)$ is a step function with discontinuities contained in the positive integers---compare Lemma~\ref{lm:rho_0}. Note that in case of the Bernoulli example, increasing the sample size from an odd number to the next even number can affect the individual error probabilities, depending on the randomization of the decision rule, but cannot reduce their sum. Accordingly, the discontinuities of the cost function are located at the odd integers in the plot on the left-hand side of Fig.~\ref{fig:rho_c}. For $k = 1$, the cost function becomes smooth, but still admits small bumps at the discontinuities of $\rho_0(z, \bullet)$, making it non-convex and nonlinear on the intervals between the discontinuities. For $k \to \infty$, the bumps are ironed-out and the cost function becomes convex and piecewise linear---compare Lemma~\ref{lm:rho_properties}. By inspection, the reduction in cost is again most significant when going from $k = 0$ to $k = 1$. However, it takes some iterations for $\rho_k(z, \bullet)$ to become convex and piecewise linear, which are critical properties for Theorem~\ref{th:optimal_stopping} to hold. Finally, note that the reduction in cost when increasing $k$ is larger for larger values of $z$---compare Fig.~\ref{fig:rho_z}. We chose $z = 1$ since it allows for a clear interpretation of the cost function.

\subsection{Optimal Expected Remaining Sample Size} %

\begin{figure}
	\centering
	\includegraphics{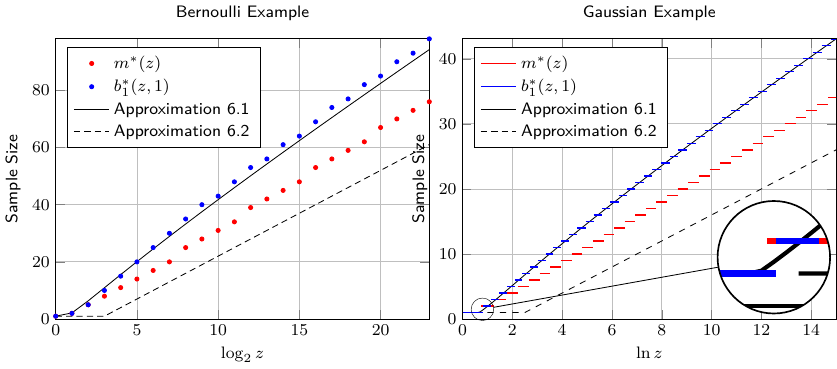}
	\caption{Optimal expected remaining sample size $m^*$ as a function of $z$}
	\label{fig:m_star}
\end{figure}

In Fig.~\ref{fig:m_star}, the function $m^*$ in \eqref{eq:m_opt} is plotted. Since the log-likelihood increment distributions are symmetric in both examples, $m^*$ is symmetric about zero. Therefore, we only plot $m^*$ for $\log z \geq 0$. Moreover, in the Bernoulli example, the log-likelihood ratio only takes integer values, so we only plot $m^*$ at these.

By inspection, $m^*$ is approximately linear in $\log z$ in both cases. This is expected in light of the discussion in the previous sections. For comparison, we also show the optimal single-randomization sample size, $b_1^*(z, 1)$, and the two approximations proposed in Section~\ref{sec:approximations}. Since the approximation in \eqref{eq:rho0_approx} is exact in the Gaussian example, the approximate upper bound coincides with $b_1^*(z, 1)$ in the plot on the right. In the Bernoulli example, the approximation is no longer exact but slightly differs from $b_1^*(z, 1)$. In general, the approximations are useful in the two examples considered here. Especially for larger $\log z$, the optimal $m^*$ is located well within the cone spanned by the two approximate bounds. In fact, the mid-point between the two bounds provides a good approximation of $m^*$ for a large range of log-likelihood ratio values. 

Interestingly, it can be seen in the magnified inset on the right that there is a small interval around $\ln z = 1.5$ on which $m^*$ exceeds both $b_1^*(z, 1)$ and the corresponding approximation. However, at this point, we cannot say with certainty if this is really a counterexample or merely a numerical artifact. In general, the question of whether $b_k^*(z, c)$ is monotonic in $k$ arguably warrants closer inspection.

\subsection{Properties of the NPKWT} %

\begin{figure}
	\centering
	\includegraphics{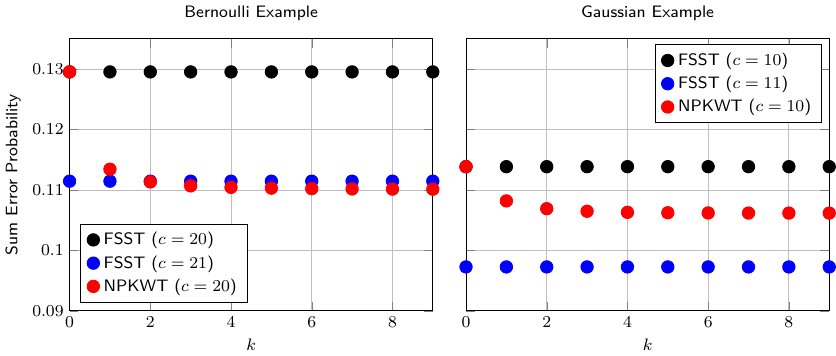}
	\caption{Sum error probability as a function of $k$}
	\label{fig:p_err_sum}
\end{figure}

In this section, we study selected properties of the NPKWT in more detail. First, the sum error probability of an NPKWT for the hypotheses in \eqref{eq:bernoulli_example} and  \eqref{eq:normal_example} is plotted as a function of the number of allowed randomization uses in Fig.~\ref{fig:p_err_sum}. Again, we used $c = 20$ for the Bernoulli example and $c = 10$ for the Gaussian example. For the Bernoulli example, it can be seen that the sum error probability decreases from approximately $0.13$ for $k = 0$ to approximately $0.11$ for $k = 10$. This is a non-negligible reduction of approximately \SI{15.4}{\percent} and highlights the potential usefulness of the NPKWT in practice. In the Gaussian example, the sum error probability decreases from approximately $0.114$ for $k = 0$ to approximately $0.106$ for $k = 10$. This is a reduction of approximately \SI{12.3}{\percent}, which  is less significant, but still noteworthy. 

For reference, the sum error probabilities of two FSSTs are plotted for sample sizes $c$ and $c + 1$. For $k = 0$, the NPKWT reduces to the FSST and its error probabilities match. Interestingly, while the FSST with a sample size of $c + 1$ achieves a lower sum error probability than the NPKWT in the Gaussian example, this is not the case in the Bernoulli example. This disproves a conjecture we had at an early stage of this work, namely, that $\rho_0(z, c) \leq \rho(z,c) \leq \rho_0(z, c+1)$. Clearly, the second bound does not hold in general. However, some insights could be gained from investigating conditions under which it does.  

\begin{figure}
	\centering
	\includegraphics{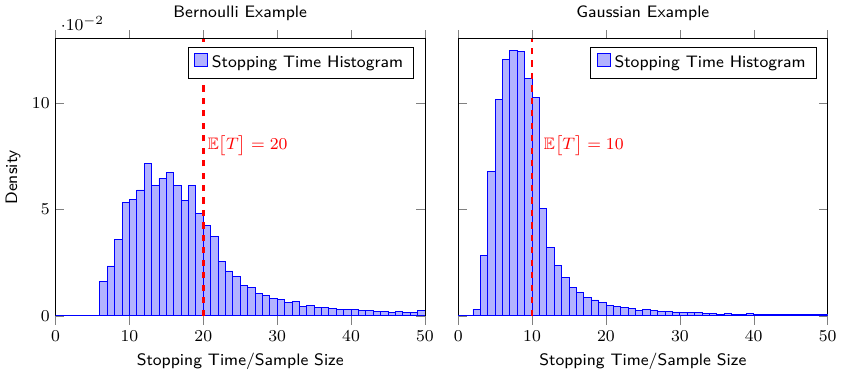}
	\caption{Histograms of the stopping times of the NPKWTs under the respective null hypotheses}
	\label{fig:stopping_time}
\end{figure}

\begin{table}
	\centering
	\caption{Statistics of the stopping time histograms in Fig.~\ref{fig:stopping_time}}
	\begin{tabular}{lrrrrr} 
		\toprule
		Example & Mean & Median & Minimum & Maximum & Standard Deviation \\
		\midrule
		Bernoulli & 19.94  & 16 & 6 & 2150 & 29.42 \\
		Gaussian & 10.03 & 8 & 1 & 2966 & 17.15 \\
		\bottomrule
	\end{tabular}
	\label{tab:stopping_time_statistics}
\end{table}

Fig.\ref{fig:stopping_time} shows (right-truncated) histograms of the stopping times of the two NPKWTs with $z = 1$ under the respective null hypotheses. Again, we used $c = 20$ for the Bernoulli example and $c = 10$ for the Gaussian example. The depicted results are based on $10^5$ Monte Carlo simulations. Both stopping time distributions are approximately of the same shape, with a single mode just below the targeted expected sample size and a pronounced right tail. Interestingly, while the NPKWT in the Gaussian example can stop at any time, its counterpart in the Bernoulli example has a minimum sample size of six. This is a consequence of the bounded log-likelihood ratio increments under the Bernoulli hypotheses. It takes at least six samples to reach a log-likelihood ratio value that is large in comparison to the remaining number of samples. 

Selected summary statistics of the two stopping time histograms in Fig.~\ref{fig:stopping_time} are given in Table~\ref{tab:stopping_time_statistics}. Arguably, the one statistic that stands out is the maximum stopping time observed over $10^5$ Monte Carlo runs, which is well over \num{2000} samples in both cases. On the one hand, this illustrates the fact that, despite having a bounded expected sample size, the NPKWT is untruncated in general and can, in principle, use arbitrarily many samples for any individual run. On the other hand, for sample sizes of this order the magnitude of the log-likelihood ratio becomes extremely large and numerical issues can arise when evaluating $\rho$ and $m^*$. Therefore, we urge the reader to take the exact numbers with a grain of salt.

\begin{figure}
	\centering
	\includegraphics{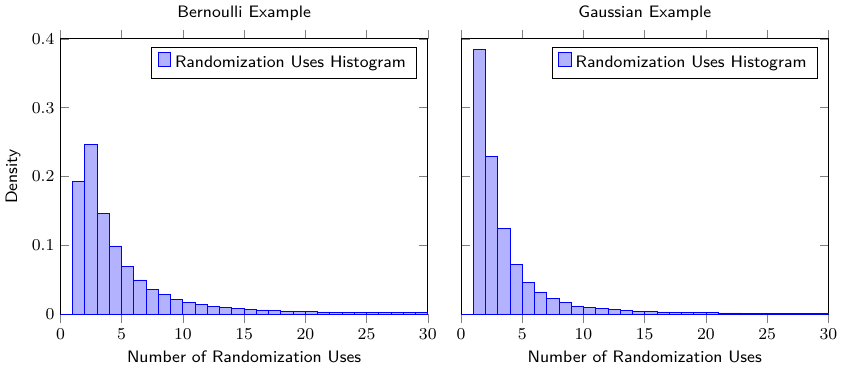}
	\caption{Histograms of the number of randomization uses of the NPKWTs under the respective null hypotheses}
	\label{fig:randomization}
\end{figure}

\begin{table}
	\centering
	\caption{Statistics of the randomization use histograms in Fig.~\ref{fig:stopping_time}}
	\begin{tabular}{lrrrrr} 
		\toprule
		Example & Mean & Median & Minimum & Maximum & Standard Deviation \\
		\midrule
		Bernoulli & 6.18 & 3 & 1 & 1201 & 17.17 \\
		Gaussian & 4.01 & 2 & 1 & 2168 & 12.40 \\
		\bottomrule
	\end{tabular}
	\label{tab:randomization_statistics}
\end{table}

Histograms of the number of randomization uses by the NPKWT are shown in Fig.~\ref{fig:randomization}. Selected summary statistics of these histograms are given in Table~\ref{tab:randomization_statistics}. 
By inspection, most instances of the NPKWT stop after at most three randomization uses in the Bernoulli case and two randomization uses in the Gaussian case. This observation confirms that in most instances a small number of randomized stopping decisions are sufficient to implement the NPKWT. However, the histograms also show large variances and significant right tails, again highlighting the untruncated nature of the NPKWT. As before, the maxima given in Table~\ref{tab:randomization_statistics}, especially in the Gaussian case, should be interpreted with caution as they could be affected by numerical noise. 

We conclude this section with an illustration of the stopping policy of the NPKWT for two specific sequences of observations. First, consider the NPKWT for the hypotheses in \eqref{eq:normal_example}, and assume that $z = 1$ and $c = 20$. If $x_t = \sfrac{1}{2}$ for all $t \geq 1$, the observations do not provide any information about the true hypothesis, and the likelihood ratio remains constant, namely, $z_t = z = 1$ for all $t \geq 1$. In turn, $m^*(z_t) = m^*(1) = 1$ for all $t \geq 1$. Using the update rules in Theorem~\ref{th:optimal_stopping}, the test simply decrements $c_t$ after each sample and stops when $c_t = 0$. That is, the NPKWT reduces to an FSST with $c = 20$ samples. This is an example of how the NPKWT avoids large stopping times in scenarios where a conventional SPRT breaks down because the observations do not provide sufficient evidence to cross a decision threshold. 

\begin{table}
	\centering
	\caption{Continuation probabilities of the NPKWT for the hypotheses in \eqref{eq:bernoulli_example} when observing the all-ones sequence.}
	\begin{tabular}{cccccccccccc}
		\toprule
		$t$ & $\leq$ 2 & 3 & 4 & 5 & 6 & 7 & 8 & 9 & 10 & 11 & 12 \\[2pt]
		$1 - \psi(2^t)$ & $1$ & $\sfrac{7}{8}$ & $\sfrac{7}{11}$ & $\sfrac{10}{14}$ & $\sfrac{13}{17}$ & $\sfrac{16}{20}$ & $\sfrac{19}{25}$ & $\sfrac{24}{28}$ & $\sfrac{27}{31}$ & $\sfrac{30}{34}$ & $\sfrac{33}{39}$ \\
		\bottomrule
	\end{tabular}
	\label{tab:continuation_probabilities}
\end{table}

Next, consider the NPKWT for the hypotheses in \eqref{eq:bernoulli_example}, and assume that $z = 1$ and $c = 10$. If $x_t = 1$ for all $t \geq 1$, the observations all point toward the alternative hypothesis, and the likelihood ratio is monotonically increasing, namely, $z_t = 2^t$. The resulting sequence of continuation probabilities is shown in Table~\ref{tab:continuation_probabilities}. In this example, randomization starts after three samples, and the structure of the randomization rule in \eqref{eq:stopping_rule} is reflected in these probabilities: for $t \geq 3$, the continuation probabilities are of the form $\tfrac{m^*(2^{t-1}) - 1}{m^*(2^t)}$. Interestingly, the continuation probability approaches $1$ as the evidence for $\Hcal_1$ increases. This might seem counterintuitive, but is in line with the discussion in the previous section: since $m^*$ is approximately linear in $\log z$, the \emph{absolute} increment in expected remaining sample size after each likelihood ratio update, $c_t - c_{t-1}$, is approximately constant. However, \emph{relative} to $c_t$, this increment, and in turn the stopping probability, tends to zero. This illustrates how the NPKWT can delay stopping in cases where the observations are highly informative, and helps to understand the extremely large sample sizes observed in our experiments.

\section{Conclusion and Outlook} 
\label{sec:conclusion}					 

In this paper, we have formulated and solved the nonparametric Kiefer--Weiss problem. The corresponding sequential test has been shown to rely heavily on randomized stopping rules, and the optimal randomization to be defined by a function that maps the current likelihood ratio to a minimal number of expected remaining samples. The properties of the optimal test have been further explored and illustrated with numerical examples.

Questions left open in the paper include:
\begin{itemize}
	\item Can the gain of the NPKWT over the FSST be bounded? For example, is there a constant $\Delta$, possibly dependent on $P_0$ and $P_1$, such that $\rho(z,c) \leq \rho_0(z, c + \Delta)$?
	\item What can be said about the stopping time distribution of the NPKWT under a given sample distribution $\Pb$?
	\item Is the optimal sample size $b_k^*(z, c)$ in \eqref{eq:b_opt_k} monotonic in $k$?
\end{itemize}

Another possible research direction is to investigate the properties of the NPKWT for vanishing error probabilities, or, equivalently, for $c \to \infty$. Although this topic has not been addressed in this paper, we conjecture that in this asymptotic regime the optimal stopping rules simplify significantly, possibly to a variation of Approximation~\ref{apr:lower_bound}. However, we expect a formal proof to be non-trivial, as it needs to take the dynamics of $c_t$ into account. Even for large initial values $c_0$, the NPKWT can, in principle, re-enter a non-asymptotic regime as the test progresses.

Finally, we would like to emphasize that the NPKWT is not only of theoretical interest, but can be an attractive choice in practice as well. In contrast to conventional KWTs, which often have time-dependent or two-dimensional thresholds and require substantial computation, the NPKWT is essentially an augmented SPRT. Moreover, as discussed before, it is robust against deviations from the optimal mapping $m^*$, so that, in practice, approximate stopping policies such as the ones proposed in Section~\ref{sec:approximations} can be used with minimal computational overhead. Therefore, we encourage interested researchers and practitioners to identify and investigate potential applications.

\appendix

\section{Proof of Theorem~\ref{th:equivalence}} 
\label{apx:equivalence}                         

Assume that $\psi$ satisfies the condition in \eqref{eq:stopping_policy_pointwise}. It then holds that
\begin{equation*}
	\Ebb_{\Pb, \psi}\bigl[ T \bigr] = \Ebb_{\Pb}\bigl[ \Ebb_\psi[ T \,|\, \bm{X}] \bigr] = \Ebb_{\Pb}\bigl[ \tau(\bm{X}) \bigr] \leq \Ebb_{\Pb}[\, c \,] = c,
\end{equation*}
so that $\psi$ also satisfies the condition in \eqref{eq:stopping_policy_prob}. Now assume that $\psi$ violates the condition in \eqref{eq:stopping_policy_pointwise}. In this case, there exists at least one sequence $\tilde{\bm{x}}$ for which
\begin{equation*}
	\Ebb_\psi\bigl[ \tau(\tilde{\bm{x}}) \bigr] > c.
\end{equation*}
This implies that
\begin{equation*}
	\Ebb_{\bm{1}_{\tilde{\bm{x}}}, \psi}\bigl[\, T \,\bigr] = \Ebb_{\psi}\bigl[\, T \,|\, \bm{X} = \tilde{\bm{x}} \,\bigr] = \Ebb_{\psi}\bigl[\, \tau(\tilde{\bm{x}}) \,\bigr] > c,
\end{equation*}
where $\bm{1}_{\bm{x}} \in \Mcal(\Xcal^\infty)$ denotes a unit point mass on $\bm{x}$. Therefore, $\psi$ also violates the condition in \eqref{eq:stopping_policy_prob}. This completes the proof.

\section{Proof of Theorem~\ref{th:reduction}} 
\label{apx:reduction}                         

By conditioning on the stopping time we obtain
\begin{align}
	\alpha(\psi, \delta) + & z \, \beta(\psi, \delta) \notag \\
	&= \Ebb_{0, \psi, \delta}\bigl[ D \bigr] + z \, \Ebb_{1, \psi, \delta}\bigl[ 1 - D \bigr] \notag \\
	&= \sum_{t=0}^\infty \Bigl( \Ebb_{0, \delta}\bigl[ D \,|\, T = t \bigr] \Pbb_{0, \psi}\bigl[ T = t \bigr] + z \, \Ebb_{1, \delta}\bigl[ 1 - D \,|\, T = t \bigr] \Pbb_{1, \psi}\bigl[ T = t \bigr] \Bigr). \label{eq:err_t}
\end{align}
Additionally conditioning on the observed sequence yields
\begin{align}
	\Ebb_{0, \delta}\bigl[ D \,|\, T = t \bigr] & \Pbb_{0, \psi}\bigl[ T = t \bigr] \notag \\
	&= \int_{\Xcal^t} \Ebb_{0, \delta}\bigl[ D \,|\, T = t, \bm{X}_t = \bm{x}_t \bigr] \Pbb_{0, \psi}\bigl[ T = t \,|\, \bm{X}_t = \bm{x}_t \bigr] \prod_{n=1}^t p_0(x_n) \, \mathrm{d}\bm{x}_t \notag \\
	&= \int_{\Xcal^t} \delta_t(\bm{x}_t) \phi_t(\bm{x}_t) \prod_{n=1}^t p_0(x_n) \, \mathrm{d}\bm{x}_t \label{eq:err_t_0}
\end{align}
and
\begin{align}
	\Ebb_{1,\delta}\bigl[ 1 &- D \,|\, T = t \bigr] \Pbb_{1, \psi}\bigl[ T = t \bigr] \notag \\
	&= \int_{\Xcal^t} \Ebb_{1,\delta}\bigl[ 1 - D \,|\, T = t, \bm{X}_t = \bm{x}_t \bigr] \Pbb_{1,\psi}\bigl[ T = t \,|\, \bm{X}_t = \bm{x}_t \bigr] \prod_{n=1}^t p_1(x_n) \, \mathrm{d}\bm{x}_t \notag \\
	&= \int_{\Xcal^t} (1 - \delta_t(\bm{x}_t)) \phi_t(\bm{x}_t) \prod_{n=1}^t p_1(x_n) \, \mathrm{d}\bm{x}_t, \label{eq:err_t_1}
\end{align}
with $\phi_t$ and $\delta_t$ defined in \eqref{eq:phi_t} and \eqref{eq:delta_t}, respectively. Substituting \eqref{eq:err_t_0} and \eqref{eq:err_t_1} back into \eqref{eq:err_t} yields the following lower bound:
\begin{align*}
	\alpha(\psi, \delta) + &z \, \beta(\psi, \delta) \\
	&= \sum_{t=0}^\infty \int_{\Xcal^t} \delta_t(\bm{x}_t) \phi_t(\bm{x}_t) \prod_{n=1}^t p_0(x_n) + z (1 - \delta_t(\bm{x}_t)) \phi_t(\bm{x}_t) \prod_{n=1}^t p_1(x_n) \, \mathrm{d}\bm{x}_t \\
	&\geq \sum_{t=0}^\infty \int_{\Xcal^t} \min\biggl\{ \prod_{n=1}^t p_0(x_n) \,,\, z \prod_{n=1}^t p_1(x_n) \biggr\} \phi_t(\bm{x}_t) \, \mathrm{d}\bm{x}_t \\
	&= \sum_{t=0}^\infty \int_{\Xcal^t} \min\biggl\{ 1 \,,\, z \prod_{n=1}^t \frac{p_1(x_n)}{p_0(x_n)} \biggr\} \phi_t(\bm{x}_t) \, \prod_{s=1}^t p_0(x_s) \mathrm{d}\bm{x}_t \\
	&= \sum_{t=0}^\infty \Ebb_{0, \psi}\biggl[ g\biggl(z \prod_{n=1}^t \frac{p_1(X_n)}{p_0(X_n)} \biggr) \phi_t(\bm{X}_t) \biggr] \\
	&= \Ebb_{0, \psi}\Biggl[ \sum_{t=0}^\infty  g\biggl(z \prod_{n=1}^t \frac{p_1(X_n)}{p_0(X_n)} \biggr) \phi_t(\bm{X}_t) \Biggr] \\
	&= \Ebb_{0, \psi}\Biggl[ g\biggl(z \prod_{t=1}^T \frac{p_1(X_t)}{p_0(X_t)} \biggr) \Biggr],
\end{align*}
where in the second-to-last step summation and expectation can be interchanged since all summands are non-negative \cite[2.37~Fubini-Tonelli Theorem]{Folland1999}. The lower bound is attained if and only if $\delta_t$ is of the form \eqref{eq:delta_opt} in Theorem~\ref{th:reduction}. This completes the proof.

\section{Proof of Lemma~\ref{lm:existence}} 
\label{apx:existence}												
The problem in \eqref{eq:nonpa_kiwei_stopping} can be reparametrized in terms of the conditional distribution of the stopping time:
\begin{equation}
	\inf_{\phi \in \mathcal{S}} \, \Ebb_0\Biggl[\sum_{t=0}^\infty  Y_t(\bm{X}_t) \phi_t(\bm{X}_t) \Biggr] \quad \text{s.t.} \quad \sum_{t=0}^\infty t \, \phi_t(\bm{x}_t) \leq c \quad \forall \bm{x} \in \Xcal^\infty,
	\label{eq:constrained_optimal_stopping}
\end{equation}
where $\mathcal{S}$ denotes the space of stopping time distributions and
\begin{equation*}
	Y_t(\bm{X}_t) \coloneq g\biggl( z \prod_{n=1}^t \frac{p_1(X_n)}{p_0(X_n)} \biggr).
\end{equation*}
In a nutshell, the existence of a minimizer follows from the fact that \eqref{eq:constrained_optimal_stopping} is a well-defined optimal stopping problem: The cost, $Y_t$, is non-negative and bounded for all $t \geq 0$, and both the objective and the constraints are linear in $\phi$. 

More formally, we need to show that the set of feasible stopping time distributions is compact and that the objective function is lower-semiconscious. Existence of a minimizer then follows from the extreme value theorem \cite[Theorem~2.43]{Aliprantis2007}.

The seminal work on compactness of stopping times is due to Baxter and Chacon who identified the appropriate topology \cite{Baxter1977}. All statements made in what follows hold in the Baxter-Chacon (BC) topology. 

A formal proof proceeds as follows:
\begin{enumerate}
	\item The space of discrete-time stopping time distributions, $\mathcal{S}$, is sequentially compact \cite[Theorem~1.1]{Edgar1982}.
	\item Define the sequence of functions $(f_n)_{n \geq 0}$ as
	\begin{equation*}
		f_n(\phi) \coloneq \sum_{t=0}^n t \, \phi_t(\bm{x}_t).
	\end{equation*}
	For all $n \geq 0$, the function $f_n$ is linear and bounded and, therefore \cite[Chapter~VI, Theorem~1.1]{Conway1994}, continuous in $\phi$. Moreover, since $t \, \phi_t \geq 0$ for all $t \geq 0$, the sequence $(f_n)_{n \geq 0}$ is non-decreasing so that
	\begin{equation*}
		\sum_{t=0}^\infty t \, \phi_t(\bm{x}_t) = \sup_{n \geq 0} f_n(\phi).
	\end{equation*}
	Since the supremum of continuous functions is lower semicontinuous \cite[Lemma 2.41]{Aliprantis2007}, this implies that the constraint function in \eqref{eq:constrained_optimal_stopping} is lower semicontinuous.
	\item By \cite[Definition~2.8]{Rudin1987}, sublevel sets of lower semicontinuous functions are closed. Moreover, arbitrary intersections of closed sets are closed \cite[Theorem~6.1]{Munkres1974}, and closed subsets of compact sets are compact \cite[Theorem 2.35]{Rudin1976}. It follows that the subset of $\mathcal{S}$ defined by the constraints in \eqref{eq:constrained_optimal_stopping} is compact. 
	\item Since $Y_t \geq 0$ for all $t \geq 0$, it follows from Fatou's lemma \cite[Theorem 11.31]{Rudin1976} that the objective function in \eqref{eq:constrained_optimal_stopping} is lower semicontinuous.
\end{enumerate}
This concludes the proof.

\section{Proof of Lemma~\ref{lm:rho_k_properties}} 
\label{apx:rho_k_properties}                       

To show the first statement, let $\gamma \in [0, 1]$ and $z_1, z_2 \geq 0$. For all $k \geq 0$ it holds that
\begin{align*}
	\rho_k(\gamma z_1 + (1 - \gamma) z_2, c) &= \inf_{\psi \in \Ical_{c,k}^\infty} \alpha(\psi) + (\gamma z_1 + (1 - \gamma) z_2) \beta(\psi) \\
	&= \inf_{\psi \in \Ical_{c,k}^\infty} \gamma \bigl( \alpha(\psi) + z_1 \beta(\psi) \bigr) + (1 - \gamma) \bigl( \alpha(\psi) + z_2 \beta(\psi) \bigr) \\
	&\geq \gamma \inf_{\psi \in \Ical_{c,k}^\infty} \alpha(\psi) + z_1 \beta(\psi) + (1 - \gamma) \inf_{\psi \in \Ical_{c,k}^\infty} \alpha(\psi) + z_2 \beta(\psi) \\
	&= \gamma \rho_k(z_1, c) + (1 - \gamma) \rho_k(z_2, c).
\end{align*} 
This proves that $\rho_k$ is concave in $z$. Now, assume that $z_2 > z_1$. Since $\alpha$ and $\beta$ are non-negative
\begin{equation*}
	\alpha(\psi) + z_2 \, \beta(\psi) \geq \alpha(\psi) + z_1 \, \beta(\psi)
\end{equation*}
for any stopping policy $\psi$. Taking the infimum over $\Ical_{c,k}^\infty$ on both sides yields
\begin{align*}
	\inf_{\psi \in \Ical_{c,k}^\infty} \alpha(\psi) + z_2 \, \beta(\psi) &\geq \inf_{\psi \in \Ical_{c,k}^\infty} \alpha(\psi) + z_2 \, \beta(\psi) \\
	\rho_k(z_2, c) &\geq \rho_k(z_1, c).
\end{align*}

Finally, since $\rho_k(z, c)$ is non-increasing in $c$, it holds that
\begin{equation*}
	\rho_k(z, c) \leq \rho_k(z, 0) = \Ebb_0\biggl[ g\biggl(z \prod_{n=1}^0 \frac{p_1(X_n)}{p_0(X_n)} \biggr) \biggr] = g(z).
\end{equation*}
This completes the proof.

\section{Proof of Lemma~\ref{lm:rho_k_recursive}} 
\label{apx:rho_k_recursive}												

Consider the problem of choosing the initial stopping probability, $\psi_0$, of a test with stopping policy $\psi \in \Ical_{c,k}^\infty$. Conceptually speaking, we can distinguish between three cases:
\begin{enumerate}
	\item Stop with certainty and make a decision ($\psi_0 = 1$).
	\item Continue with certainty and save all $k$ randomization uses for later ($\psi_0 = 0$).
	\item Use a randomized stopping rule and either stop, or continue with a policy that allows for at most $k-1$ randomization uses ($0 < \psi_0 < 1$).
\end{enumerate}
The optimal choice is then the one that minimizes the expected cost. In order to formalize this argument, we define
\begin{equation*}
	\rho_k(z, c \,|\, \psi_0 = \eta) \coloneq \inf_{(\eta, \psi_{1:\infty}) \in \Ical_{c,k}^\infty} \; \Ebb_{0, \psi}\Biggl[ g\biggl( z \prod_{n=1}^T \frac{p_1(X_n)}{p_0(X_n)} \biggr) \,\bigg|\, \psi_0 = \eta \Biggr],
\end{equation*}
where $\psi_{1:\infty}$ is shorthand for the sequence $(\psi_1, \psi_2, \ldots)$. First, assume that $\psi_0 = 1$, that is, the test stops with certainty before taking the first sample. In this case, $\phi_0 = 1$ and
\begin{equation}
	\rho_k(z, c \,|\, \psi_0 = 1) = \Ebb_0\Biggl[ g\biggl( z \prod_{n=1}^0 \frac{p_1(X_n)}{p_0(X_n)} \biggr) \Biggr] = g(z).
	\label{eq:rho_k_1}
\end{equation}
Next, assume that $\psi_0 = 0$, that is, the test takes the first sample with certainty. Note that this choice is only feasible if $c \geq 1$. In this case
\begin{align}
	\rho_k(z, c \,|\, \psi_0 = 0) &= \inf_{(0, \psi_{1:\infty}) \in \Ical_{c,k}^\infty} \; \Ebb_{0, \psi}\Biggl[ g\biggl(z \prod_{n=1}^T \frac{p_1(X_n)}{p_0(X_n)} \biggr) \,\bigg|\, \psi_0 = 0 \Biggr] \notag \\
	&= \inf_{(0, \psi_{1:\infty}) \in \Ical_{c,k}^\infty} \;  \Ebb_0\Biggl[ \Ebb_{0, \psi}\Biggl[ \sum_{t=1}^\infty g\biggl(z \prod_{n=1}^T \frac{p_1(X_n)}{p_0(X_n)} \biggr) \,\bigg|\, \psi_0 = 0, X_1 \Biggr] \Biggr] \notag \\
	&= \Ebb_0\Biggl[ \inf_{(0, \psi_{1:\infty}) \in \Ical_{c,k}^\infty} \; \Ebb_{0, \psi}\Biggl[ \sum_{t=1}^\infty  g\biggl( z\frac{p_1(X_1)}{p_0(X_1)} \prod_{n=2}^T \frac{p_1(X_n)}{p_0(X_n)} \biggr) \,\bigg|\, \psi_0 = 0, X_1 \Biggr] \Biggr],
	\label{eq:rho_k_0_intermediate}
\end{align}
where in the last step expectation and minimization can be interchanged since all $\psi_t$, $t \geq 1$ are measurable functions of $X_1$, that is, the infimum can be taken pointwise---compare \cite[Theorem~14.60]{Rockafellar2009}. Now, let $\bm{X}'$ and $\psi'$ denote shifted versions of $\bm{X}$ and $\psi$, respectively. More specifically, we define $X'_t \coloneq X_{t+1}$ and $\psi'_t \coloneq \psi_{t+1}$ for all $t \geq 0$. By construction, it holds that $\psi' \in \Ical_{c-1,k}^\infty$. This follows since $\psi \in  \Ical_{c,k}^\infty$ and
\begin{align*}
	\Ebb_{(0, \psi_{1:\infty})}[\tau(\bm{x})] &= \sum_{t=1}^\infty t \, \phi_t(\bm{x}_t) \\
	&= \sum_{t=0}^\infty (t + 1) \, \phi'_t(\bm{x}'_t) \\
	&= \sum_{t=0}^\infty t \, \phi'_t(\bm{x}'_t) + \sum_{t=0}^\infty \phi'_t(\bm{x}'_t) \\
	&= \Ebb_{\psi'}[\tau(\bm{x})] + 1,
\end{align*}
where $\phi'_t$ is shorthand for $\phi_t$ in \eqref{eq:phi_t} evaluated at $\psi = \psi'$. The inner minimization in \eqref{eq:rho_k_0_intermediate} can now be written as
\begin{align}
	\inf_{(0, \psi_{1:\infty}) \in \Ical_{c,k}^\infty} & \Ebb_{0, \psi}\Biggl[ \sum_{t=1}^\infty  g\biggl( z\frac{p_1(X_1)}{p_0(X_1)} \prod_{n=2}^T \frac{p_1(X_n)}{p_0(X_n)} \biggr) \,\Big|\, \psi_0 = 0, X_1 \Biggr] \notag \\
	&= \inf_{(0, \psi_{1:\infty}) \in \Ical_{c,k}^\infty} \Ebb_{0, \psi}\Biggl[ \sum_{t=1}^\infty g\biggl( z\frac{p_1(X_1)}{p_0(X_1)} \prod_{n=2}^{t} \frac{p_1(X_n)}{p_0(X_n)} \biggr) \phi_t(\bm{X}_t) \,\Big|\, \psi_0 = 0, X_1 \biggr] \notag \\
	&= \inf_{\psi' \in \Ical_{c-1, k}^\infty} \Ebb_{0, \psi'}\Biggl[ \sum_{t=0}^\infty  g\biggl( z\frac{p_1(X'_0)}{p_0(X'_0)} \prod_{n=1}^t \frac{p_1(X'_n)}{p_0(X'_n)} \biggr) \phi'_t(\bm{X}'_t) \,\Big|\, X'_0 \Biggr] \notag \\
	&= \inf_{\psi' \in \Ical_{c-1, k}^\infty} \Ebb_{0, \psi'}\Biggl[ g\biggl( z\frac{p_1(X'_0)}{p_0(X'_0)} \prod_{n=1}^T \frac{p_1(X'_n)}{p_0(X'_n)} \biggr) \,\Big|\, X'_0 \Biggr] \notag \\
	&= \rho_k\biggl( z \frac{p_1(X'_0)}{p_0(X'_0)}, c - 1\biggr) \notag \\
	&= \rho_k\biggl( z \frac{p_1(X_1)}{p_0(X_1)}, c - 1\biggr). \label{eq:rho_k_0_conditioned}
\end{align}
Substituting \eqref{eq:rho_k_0_conditioned} back into \eqref{eq:rho_k_0_intermediate} and making the restriction to $c \geq 1$ explicit yields
\begin{equation}
	\rho_k(z, c \,|\, \psi_0 = 0) = \begin{dcases}
		\Ebb_0\biggl[\rho_k\biggl( z \frac{p_1(X)}{p_0(X)}, c - 1\biggr)\biggr], & c \geq 1, \\[2pt]
		\text{undefined}, & c < 1,
	\end{dcases}
	\label{eq:rho_k_0}
\end{equation}
where we dropped the index of $X$ since all $X_t$ are i.i.d.

Finally, assume that $\eta \in (0,1)$, that is, the test stops with probability $\eta$ and continues with probability $1 - \eta$. In this case, the conditional cost is a mixture of the form
\begin{equation}
	\eta \, g(z) + (1 - \eta) \, \Ebb_0\biggl[\rho_{k-1}\biggl(z \frac{p_1(X)}{p_0(X)}, b - 1 \biggr)\biggr],
	\label{eq:rho_k_eta_b}
\end{equation}
with $b \geq 1$. To see this, consider the following. If the test stops, \eqref{eq:rho_k_1} holds and the resulting cost is $g(z)$. If the test continues, the arguments leading to \eqref{eq:rho_k_0} apply, but with two key differences: first, the expected sample size of the test conditioned on $S_0 = 0$ can exceed $c$, as long as the \emph{unconditional} expected sample size is bounded by $c$; second, the number of remaining randomization uses for $t \geq 1$ reduces to $k - 1$. In combination, this means that $\psi' \in \Ical_{b-1, k-1}$, where $b$ is a free variable. Substituting $\psi' \in \Ical_{b-1, k-1}$ for $\psi' \in \Ical_{c-1, k}$ in the steps leading to \eqref{eq:rho_k_0_conditioned} yields the expected cost of continuing in \eqref{eq:rho_k_eta_b}.

We now establish the feasible region for $\eta$ and $b$. First, it trivially needs to hold that $b \geq 1$ since continuing the test implies taking at least one more sample. Moreover, in order for $(\eta, \psi_{1:\infty}) \in \Ical_{c,k}^\infty$, it needs to hold that
\begin{align*}
	\Ebb_{(\eta, \psi_{1:\infty})}\bigl[\tau(\bm{x})\bigr] &\leq c \\
	\eta \, \Ebb_{\psi_{1:\infty}}\bigl[\tau(\bm{x}) \,|\, S_0 = 1 \bigr] + (1 - \eta) \, \Ebb_{\psi_{1:\infty}}\bigl[\tau(\bm{x}) \,|\, S_0 = 0 \bigr] &\leq c \\
	\eta \, 0 + (1 - \eta) \, \bigl(1 + \Ebb_{\psi'}\bigl[\tau(\bm{x}')\bigr] \bigr) &\leq c\\
	\Ebb_{\psi'}\bigl[\tau(\bm{x}')\bigr] &\leq \frac{c}{1 - \eta} - 1 
\end{align*}
for all $\bm{x} \in \Xcal^\infty$, where $\bm{x}'$ and $\psi'$ are as defined above. Since $\psi' \in \Ical_{b-1, k-1}^\infty$, this implies 
\begin{equation}
	b \leq \frac{c}{1 - \eta}. 
	\label{eq:b_of_eta}
\end{equation}
In order to determine the optimal $b$, it is useful to treat the cases $c < 1$ and $c \geq 1$ separately. First, assume that $c \geq 1$. In this case, minimizing the expression in \eqref{eq:rho_k_eta_b} over $b$ yields
\begin{align}
	\rho_k(z, c \,|\, \psi_0 = \eta) &= \inf_{1 \leq b \leq \frac{c}{1 - \eta}} \, \eta \, g(z) + (1 - \eta) \, \Ebb_0\biggl[\rho_{k-1}\biggl(z \frac{p_1(X)}{p_0(X)}, b - 1 \biggr)\biggr] \notag \\
	&= \eta \, g(z) + (1 - \eta) \, \Ebb_0\biggl[\rho_{k-1}\biggl(z \frac{p_1(X)}{p_0(X)}, \frac{c}{1 - \eta} - 1 \biggr)\biggr]
	\label{eq:rho_k_eta_min_c_large}
\end{align}
where the second equality holds since $\rho_{k-1}(z, c)$ is non-increasing in $c$---see Lemma~\ref{lm:rho_k_properties}. 

Now assume that $c < 1$. In this case, the right-hand side of \eqref{eq:b_of_eta} needs to be larger than or equal to one in order for any feasible $b$ to exist:
\begin{equation}
	\rho_k(z, c \,|\, \psi_0 = \eta) = \begin{dcases}
		\eta \, g(z) + (1 - \eta) \, \Ebb_0\biggl[\rho_{k-1}\biggl(z \frac{p_1(X)}{p_0(X)}, \frac{c}{1 - \eta} - 1 \biggr)\biggr], & \frac{c}{1-\eta} \geq 1 \\
		\text{undefined}, & \frac{c}{1-\eta} < 1 
	\end{dcases} 
	\label{eq:rho_k_eta_min_c_small}
\end{equation}
The expressions in \eqref{eq:rho_k_eta_min_c_large} and \eqref{eq:rho_k_eta_min_c_small} can be combined by parametrizing them as follows:
\begin{align}
	\rho_k\Bigl(z, c \,\Big|\, \psi_0 = 1 - \frac{c}{b}\Bigr) &= \Bigl(1 - \frac{c}{b} \Bigr) \, g(z) + \frac{c}{b} \, \Ebb_0\biggl[\rho_{k-1}\biggl(z \frac{p_1(X)}{p_0(X)}, b - 1 \biggr)\biggr] \notag \\
	&= g(z) - c \, \frac{g(z) - \Ebb_0\Bigl[\rho_{k-1}\Bigl(z \frac{p_1(X)}{p_0(X)}, b - 1 \Bigr)\Bigr]}{b},
	\label{eq:rho_k_eta}
\end{align}
where $b \geq 1$ if $c < 1$ and $b > c$ if $c \geq 1$.

The result in the lemma can now be obtained by combining \eqref{eq:rho_k_1}, \eqref{eq:rho_k_0} and \eqref{eq:rho_k_eta}. First, assume that $c < 1$. In this case, it holds that
\begin{align*}
	\rho_k(z, c) &= \inf_{\eta \in (0,1]} \rho_k(z, c \,|, \psi_0 = \eta) \\
	&= \min\Bigl\{ \, \inf_{b \geq 1} \rho_k\Bigl(z, n \,|\, \psi_0 = 1 - \frac{c}{b}\Big) \,,\, \rho_k(z, n \,|\, \psi_0 = 1) \Bigr\} \\
	&= \min\biggl\{\, \inf_{b \geq 1} g(z) - c \,  \frac{g(z) - \Ebb_0\bigl[\rho_{k-1}\bigl(z \frac{p_1(X)}{p_0(X)}, b - 1 \bigr)\bigr]}{b} \,,\, g(z) \biggr\} \\
	&= g(z) - c \, \sup_{b \geq 1}  \frac{g(z) - \Ebb_0\bigl[\rho_{k-1}\bigl(z \frac{p_1(X)}{p_0(X)}, b - 1 \bigr)\bigr]}{b},
\end{align*}
where we used the fact that $\lim_{b \to \infty} \rho_k\bigl(z, c \,\big|\, \psi_0 = 1 - \frac{c}{b}\bigr) = \rho_k(z, c \,|\, \psi_0 = 1) = g(z)$ so that the case $\psi_0 = 1$ does not need to be treated separately. For $c \geq 1$ it holds that
\begin{align}
	\rho_k(z, c) &= \inf_{\eta \in [0,1]} \rho_k(z, c \,|, \psi_0 = \eta) \notag \\
	&= \min\Bigl\{ \rho_k(z, n \,|\, \psi_0 = 0) \,,\, \inf_{b > c} \rho_k\Bigl(z, n \,|\, \psi_0 = 1 - \frac{c}{b}\Big) \,,\, \rho_k(z, n \,|\, \psi_0 = 1) \Bigr\} \notag \\
	&= \min\biggl\{ \Ebb_0\biggl[\rho_k\biggl( z \frac{p_1(X)}{p_0(X)}, c - 1\biggr)\biggr] \,,\, \notag \\
	& \hspace{0.6in} \inf_{b > c} g(z) - c \,  \frac{g(z) - \Ebb_0\bigl[\rho_{k-1}\bigl(z \frac{p_1(X)}{p_0(X)}, b - 1 \bigr)\bigr]}{b} \,,\, g(z) \biggr\} \notag \\
	&= \min\biggl\{\Ebb_0\biggl[\rho_k\biggl( z \frac{p_1(X)}{p_0(X)}, c - 1\biggr)\biggr] \,,\, \notag \\
	& \hspace{0.6in} g(z) - c \, \sup_{b > c}  \frac{g(z) - \Ebb_0\bigl[\rho_{k-1}\bigl(z \frac{p_1(X)}{p_0(X)}, b - 1 \bigr)\bigr]}{b} \biggr\},
	\label{eq:rho_k_1_inf}
\end{align}
where again the case $\psi_0 = 1$ is included via the limit $b \to \infty$. 

It remains to show that the supremum is attained at a finite maximizer, and that $c$ can be included in the maximization domain in \eqref{eq:rho_k_1_inf}. To see that a finite maximizer exists, note that
\begin{equation*}
	g(z) - \Ebb_0\biggl[\rho_{k-1}\biggl(z \frac{p_1(X)}{p_0(X)}, b - 1 \biggr)\biggr] \geq \rho_{k-1}(z, 0) - \rho_{k-1}(z, b - 1 \bigr) \geq 0 
\end{equation*}
for all $b \geq 1$. Here, the first inequality follows from Jensen's inequality together with the fact that $\rho_k(\bullet , c)$ is concave, and the second holds since $\rho_k(z , \bullet)$ is non-increasing. However,
\begin{equation*}
	\lim_{b \to \infty} \, \frac{g(z) - \Ebb_0\bigl[\rho_{k-1}\bigl(z \frac{p_1(X)}{p_0(X)}, b - 1 \bigr)\bigr]}{b} \leq \lim_{b \to \infty} \, \frac{g(z)}{b} = 0.
\end{equation*}
To see that $c$ can be included in the maximization domain in \eqref{eq:rho_k_1_inf}, note that
\begin{align*}
	g(z) - c \, \frac{g(z) - \Ebb_0\bigl[\rho_{k-1}\bigl(z \frac{p_1(X)}{p_0(X)}, b - 1 \bigr)\bigr]}{b} \bigg\rvert_{b = c} &= \Ebb_0\biggl[\rho_{k-1}\biggl(z \frac{p_1(X)}{p_0(X)}, c - 1 \biggr)\biggr] \\
	&\geq \Ebb_0\biggl[\rho_k\biggl(z \frac{p_1(X)}{p_0(X)}, c - 1 \biggr)\biggr],
\end{align*}
where the last inequality holds since $\rho_k$ is non-increasing in $k$. This completes the proof.

\section{Proof of Lemma~\ref{lm:integral_equation}} 
\label{apx:integral_equation}												

Taking the limit $k \to \infty$ on both sides of \eqref{eq:rho_k_recursive} yields
\begin{equation*}
	\medmuskip=-1mu
	\thickmuskip=0mu
	\rho(z, c) = \begin{dcases}
		g(z) - c \, \max_{b \geq 1} \, \frac{g(z) - \Ebb_0\bigl[\rho\bigl(z \frac{p_1(X)}{p_0(X)}, b - 1\bigr) \bigr]}{b}, & c < 1, \\
		\min\biggl\{\Ebb_0\biggl[\rho\biggl( z \frac{p_1(X)}{p_0(X)}, c - 1\biggr)\biggr] \,,\, g(z) - c \, \max_{b \geq c} \frac{g(z) - \Ebb_0\bigl[\rho\bigl(z \frac{p_1(X)}{p_0(X)}, b - 1\bigr) \bigr]}{b} \biggr\}, & c \geq 1.
	\end{dcases}
	\label{eq:rho_limit_recursive}
\end{equation*}
Since
\begin{equation*}
	g(z) - c \, \frac{g(z) - \Ebb_0\bigl[\rho\bigl(z \frac{p_1(X)}{p_0(X)}, b - 1\bigr) \bigr]}{b} \biggr|_{b=c} = \Ebb_0\biggl[\rho\biggl(z \frac{p_1(X)}{p_0(X)}, c - 1\biggr) \biggr],
\end{equation*}
the outer minimum in the case $c \geq 1$ can be omitted and $\rho$ can be written as
\begin{equation*}
	\rho(z, c) = \begin{dcases}
		g(z) - c \, \max_{b \geq 1} \, \frac{g(z) - \Ebb_0\bigl[\rho\bigl(z \frac{p_1(X)}{p_0(X)}, b - 1\bigr) \bigr]}{b}, & c < 1, \\
		g(z) - c \, \max_{b \geq c} \frac{g(z) - \Ebb_0\bigl[\rho\bigl(z \frac{p_1(X)}{p_0(X)}, b - 1\bigr) \bigr]}{b}, & c \geq 1.
	\end{dcases}
\end{equation*}
Combining both cases and rearranging the terms yields \eqref{eq:rho_bellman}. This completes the proof.

\section{Proof of Lemma~\ref{lm:rho_properties}} 
\label{apx:rho_properties}											 

The first statement in the lemma follows directly from the definition of $\rho$. In order to show convexity, let $c_1, c_2 \geq 0$ with $c_2 > c_1$, and let $\psi_1^* \in \Ical_{c_1}^\infty$ and $\psi_2^* \in \Ical_{c_2}^\infty$ be optimal in the sense of \eqref{eq:nonpa_kiwei_stopping} for $c = c _1$ and $c = c_2$, respectively. Now, consider the mixed stopping policy
\begin{equation}
	\psi^{**} = \gamma \psi_1^* + (1-\gamma) \psi_2^*,
	\label{eq:mixed_policy}
\end{equation} 
where $\gamma \in [0,1]$. \eqref{eq:mixed_policy} has to be read in the sense that a test under $\psi^{**}$ uses policy $\psi_1^*$ with probability $\gamma$ and policy $\psi_2^*$ with probability $(1 - \gamma)$. By construction, it holds that
\begin{equation*}
	\Ebb_{\psi^{**}}\bigl[ \tau(\bm{x}) \bigr] = \gamma \Ebb_{\psi_1^*}\bigl[ \tau(\bm{x}) \bigr] + (1 - \gamma) \Ebb_{\psi_2^*}\bigl[ \tau(\bm{x}) \bigr]
	\leq \gamma c_1 + (1-\gamma) c_2
\end{equation*}
for all $\bm{x} \in \Xcal^\Nbb$ so that $\psi^{**} \in \Ical_{\gamma c_1 + (1-\gamma) c_2}^\infty$. The error probabilities of the corresponding test are given by
\begin{align}
	\alpha(\psi^{**}) &= \gamma \alpha(\psi_1^*) + (1-\gamma) \alpha(\psi_2^*), \label{eq:alpha_mix} \\
	\beta(\psi^{**}) &= \gamma \beta(\psi_1^*) + (1-\gamma) \beta(\psi_2^*). \label{eq:beta_mix}
\end{align}
It follows that
\begin{align}
	\gamma \rho_k(z, c_1) + & (1 - \gamma) \rho_k(z, c_2) \notag \\
	&= \gamma \min_{\psi \in \Ical_{c_1}^\infty} \left( \alpha(\psi) + z \, \beta(\psi) \right) + (1 - \gamma) \min_{\psi \in \Ical_{c_2}^\infty} \left( \alpha(\psi) + z \, \beta(\psi) \right) \notag \\
	&= \gamma \left( \alpha(\psi_1^*) + z \, \beta(\psi_1^*) \right) + (1 - \gamma) \left( \alpha(\psi_2^*) + z \, \beta(\psi_2^*) \right) \label{eq:optimality_assumption}\\
	&= \gamma \alpha(\psi_1^*) + (1-\gamma) \alpha(\psi_2^*) + z \left( \gamma \beta(\psi_1^*) + (1-\gamma) \beta(\psi_2^*) \right) \notag \\
	&= \alpha(\psi^{**}) + z \, \beta(\psi^{**}) \label{eq:error_mixture}\\
	&\geq \min_{\psi \in \Ical_{\gamma c_1 + (1-\gamma) c_2}^\infty} \alpha(\psi) + z \, \beta(\psi,)\label{eq:mix_to_optimal} \\
	&\;= \rho(z, \gamma c_1 + (1-\gamma) c_2), \notag 
\end{align}
where \eqref{eq:optimality_assumption} holds since $\psi_1^*$ and $\psi_2^*$ are optimal, \eqref{eq:error_mixture} follows from \eqref{eq:alpha_mix} and \eqref{eq:beta_mix}, and \eqref{eq:mix_to_optimal} holds since $\psi^{**} \in \Ical_{\gamma c_1 + (1-\gamma) c_2}^\infty$.

In order to show that $\rho(z, \bullet)$ is piecewise linear, we define the following recursion:
\begin{equation}
	\tilde{\rho}_k(z, c) \coloneq g(z) - c \, \sup_{b \geq \max\{1,c\}} \frac{g(z) - \Ebb_0\bigl[\tilde{\rho}_{k-1}\bigl(z\frac{p_1(X)}{p_0(X)}, b - 1\bigr)\bigr]}{b},
	\label{eq:rho_tilde_k_recursion}
\end{equation}
with $\tilde{\rho}_0 \coloneq \rho_0$. It is not hard to show that for all $z,c \geq 0$ the sequence $\bigl( \tilde{\rho}_k(z, c) \bigr)_{k \geq 0}$ is non-increasing and converges to the same limit as $\rho_k(z, c)$, that is,
\begin{equation*}
	\lim_{k \to \infty} \tilde{\rho}_k(z, c) = \lim_{k \to \infty} \rho_k(z, c) = \rho(z, c).
\end{equation*}
The statement in the lemma can now be proven by induction. Assume that for some integer $k \geq 0$ the function $\tilde{\rho}_k(z, \bullet)$ is piecewise linear with breakpoints in $\Nbb_{\geq 0}$. In turn, the function
\begin{equation*}
	\Ebb_0\biggl[\tilde{\rho}_{k-1}\biggl(z\frac{p_1(X)}{p_0(X)}, \bullet \biggr)\biggr]
\end{equation*}
is piecewise linear with breakpoints in $\Nbb_{\geq 0}$. Since the ratio of two positive affine functions is monotonic, the argument of the supremum in \eqref{eq:rho_tilde_k_recursion} is monotonic on all intervals $[n, n+1)$ with $n \in \Nbb_{\geq 0}$. Consequently, the maximum on the right-hand side of \eqref{eq:rho_tilde_k_recursion} is either attained at $b_k^* = c$ or at some breakpoint $m \in \Nbb_{\geq 0}$. 

For $b_k^* = c$ it holds that
\begin{equation*}
	\tilde{\rho}_k(z, c) = \Ebb_0\biggl[\tilde{\rho}_{k-1}\biggl(z\frac{p_1(X)}{p_0(X)}, c - 1\biggr)\biggr],
\end{equation*}
which is piecewise linear with breakpoints in $\Nbb_{\geq 0}$ by assumption. For $b_k^* = m$ it holds that 
\begin{equation*}
	\tilde{\rho}_k(z, c) = g(z) - c \, \frac{g(z) - \Ebb_0\bigl[\tilde{\rho}_{k-1}\bigl(z\frac{p_1(X)}{p_0(X)}, m - 1\bigr)\bigr]}{m},
\end{equation*}
which is affine in $c$. This completes the induction step. The induction hypothesis holds true by Lemma~\ref{lm:rho_0}. The statement in the lemma now follows from the fact that pointwise limits of affine functions are affine\footnote{A function is affine if and only if it is convex and concave. Since pointwise limits of finite convex/concave functions are convex/concave \cite[Theorem~10.8]{Rockafellar1970}, pointwise limits of affine functions are affine.} and that $\tilde{\rho}_k(z, \bullet)$ is affine on all intervals $[n, n+1]$, $n \in \Nbb_{\geq 0}$. This completes the proof.

\section{Proof of Theorem~\ref{th:optimal_stopping}} 
\label{apx:optimal_stopping}												 

To prove the theorem, it suffices to show that
\begin{equation}
	\max\{ m^*(z) \,,\, c\} = \argmax_{b \geq \max\{1,c\}} \frac{g(z) - \Ebb_0\bigl[\rho\bigl(z\frac{p_1(X)}{p_0(X)}, b - 1\bigr)\bigr]}{b}.
	\label{eq:b_opt}
\end{equation}
For $c < 1$, it follows from the convexity and piecewise linearity of $\rho(z, \bullet)$ that the maximum in \eqref{eq:b_opt} is attained at a positive integer, that is,
\begin{align*}
	\argmax_{b \geq 1} \frac{g(z) - \Ebb_0\bigl[\rho\bigl(z\frac{p_1(X)}{p_0(X)}, b - 1\bigr)\bigr]}{b} &= \argmax_{m \in \Nbb_{\geq 1}} \frac{g(z) - \Ebb_0\bigl[\rho\bigl(z\frac{p_1(X)}{p_0(X)}, m - 1\bigr)\bigr]}{m} \\
	&= m^*(z) = \max\{ m^*(z) \,,\, c\}.
\end{align*}
For $c \geq 1$, we distinguish two cases. First, assume that $c \leq  m^*(z)$. In this case
\begin{equation*}
	\argmax_{b \geq c} \frac{g(z) - \Ebb_0\bigl[\rho\bigl(z\frac{p_1(X)}{p_0(X)}, b - 1\bigr)\bigr]}{b} 
	= m^*(z) 
	= \max\{ m^*(z) \,,\, c\}.
\end{equation*}
Now, assume that $c > m^*(z)$. Since $\rho(z, \bullet)$ is convex, it is non-decreasing on $\Rbb_{\geq m^*(z)}$. Thus,
\begin{equation*}
	\argmax_{b \geq c} \frac{g(z) - \Ebb_0\bigl[\rho\bigl(z\frac{p_1(X)}{p_0(X)}, b - 1\bigr)\bigr]}{b}
	= c 
	= \max\{ m^*(z) \,,\, c\} .
\end{equation*}
This completes the proof.

\section{Proof of \eqref{EQ:BOPT_LB0}} 
\label{apx:proof_lb_approx}						 

Let
\begin{equation*}
	u(b) \coloneq -\frac{\log z + \mu_0 b}{\sigma_0\sqrt{b}}.
\end{equation*}
Differentiating the objective function in \eqref{EQ:BOPT_LB0} with respect to $b$ gives the first-order optimality condition
\begin{equation}
	\frac{\phi(u(b))}{\Phi(u(b))} = \frac{2\sigma_0\sqrt{b}}{\log z - \mu_0 b},
	\label{eq:foc}
\end{equation}
where $\phi$ denotes the probability density function (PDF) of the standard normal distribution. Let the smallest solution of \eqref{eq:foc} be denoted by $b'$, so that $b'$ is a lower bound on the maximizer. First, assume that $b' > 1$. In this case, it holds that
\begin{equation*}
	\frac{\phi(u(1))}{\Phi(u(1))} > \frac{2\sigma_0}{\log z - \mu_0}.
\end{equation*}
Since the left-hand side of \eqref{eq:foc} is decreasing in $b$, this implies that it crosses the right-hand side from above at $b = b'$. We now further bound $b'$ from below by introducing two successive under-approximations.

The Mills ratio bound \cite[Chapter~VII, Lemma~2]{Feller1968} gives
\begin{equation}
	\frac{\phi(u)}{\Phi(u)} > -u.
	\label{eq:mills_bound}
\end{equation}
Substituting \eqref{eq:mills_bound} back into \eqref{eq:foc} and cross-multiplying yields the quadratic equation
\begin{equation}
	\mu_0^2 b^2 + 2\sigma_0^2 b - (\log z)^2 = 0
	\label{eq:foc_mod}
\end{equation}
with unique positive root
\begin{equation*}
	b'' = \frac{-\sigma_0^2+\sqrt{\sigma_0^4+\mu_0^2(\log z)^2}}{\mu_0^2}.
\end{equation*}
Since \eqref{eq:mills_bound} is a lower bound on the left-hand side of \eqref{eq:foc} and the latter crosses the right-hand side from above, it holds that $b'' < b'$. Finally, since $\sigma_0^4 > 0$ and $\mu_0 < 0$ we have $\sqrt{\sigma_0^4+\mu_0^2(\log z)^2} > -\mu_0\log z$, so that
\begin{equation*}
	b'' > b''' = \frac{-\sigma_0^2-\mu_0\log z}{\mu_0^2} = -\frac{\log z}{\mu_0}-\frac{\sigma_0^2}{\mu_0^2}.
\end{equation*}
Chaining the inequalities yields $b' \geq b'' \geq b'''$ for $b' > 1$. Finally, using the Mills ratio bound, it is not hard to show that $b' = 1$ implies $b''' < 1$. This completes the proof.

\printbibliography

\end{document}